\theoremstyle{definition}
\newcounter{Polya}
\newtheorem{theo}{Theorem}[section]
\newtheorem{pro}[theo]{Proposition}
\newtheorem{lemma}[theo]{Lemma}
\newtheorem{cor}[theo]{Corollary}
\newtheorem{defi}[theo]{Definition}
\newtheorem{obs}[theo]{Observation}
\newtheorem{rema}[theo]{Remark}
\newtheorem{prp}[theo]{Properties}
\newtheorem{exam}[theo]{Example}
\newtheorem{prps}[theo]{Propiedades}
\newtheorem{obss}[theo]{Observations}
\newtheorem{ejes}[theo]{Ejemplos}
\newenvironment{prop}{\smallskip\begin{pro}}{\end{pro}\smallskip}
\newenvironment{coro}{\smallskip\begin{cor}}{\end{cor}\smallskip}
\newenvironment{den}{\smallskip\begin{defi}}{\end{defi}\smallskip}
\def\fy{\varphi}
\def\oo{\infty}
\def\ri{\rightarrow}
\def\a{\alpha}
\def\b{\beta}
\def\de{\delta}
\def\ep{\varepsilon}
\def\o{\omega}
\def\ga{\gamma}
\def\ro{\rho}
\def\th{\theta}
\def\Bet{\mathfrak{B}}
\def\bl{\boldsymbol{\ell}}
\def\en{\subseteq}
\def\N{\mathbb{N}}
\def\R{\mathbb{R}}
\def\Ri{{\cal R}}%superficie de Riemann
\def\M{\mathbb{M}}
\def\bM{\mathbb{M}}%sucesiones fuertemente regulares
\def\L{\mathbb{L}} %sucesiones fuertemente regulares
\def\m{\boldsymbol{m}}
\def\bm{\boldsymbol{m}}
\def\no{\nonumber}
\begin{document}

\title{
Log-convex sequences and nonzero proximate orders}

\author{Javier Jim\'enez-Garrido, Javier Sanz and Gerhard Schindl}
\date{\today}

\maketitle

\abstract{Summability methods for ultraholomorphic classes in sectors, defined in terms of a strongly regular
sequence $\M=(M_p)_{p\in\N_0}$, have been put forward by A. Lastra, S. Malek and the second author~\cite{lastramaleksanz3}, and their validity depends on the possibility of associating to $\M$ a nonzero proximate order. We provide several characterizations of this and other related properties, in which the concept of regular variation for functions and sequences plays a prominent role. In particular, we show how to construct well-behaved strongly regular sequences from nonzero proximate orders.
}

\medskip
\noindent Keywords: Log-convex sequences; regular variation; proximate orders; Carleman ultraholomorphic classes.\par
\medskip
\noindent AMS 2010 Classification: 26A12, 40A05, 26A51, 30D60.\par

\section{Introduction}

A general, common treatment of summability in Carleman ultraholomorphic classes in sectors, which extends the powerful theory of $k-$summability dealing with Gevrey formal power series, has been put forward by A. Lastra, S. Malek and the second author~\cite{lastramaleksanz3}. The technique consisted in the construction of pairs of kernel functions with suitable asymptotic and growth properties, in terms of which to define formal and analytic Laplace- and Borel-like transforms which allow one to explicitly recover the sum of a summable formal power series in a direction. The main inspiration came from the theory of moment summability methods developed by W.~Balser in~\cite[Section\ 5.5]{Balserutx}, and required the notion of proximate order, appearing in the study of growth properties of holomorphic functions in sectors.

The Carleman ultraholomorphic classes $\tilde{\mathcal{A}}_{\M}(G)$ we deal with are those consisting of holomorphic functions $f$ admitting an asymptotic expansion $\hat f=\sum_{p\ge 0}a_pz^p$, in a sectorial region $G$ with vertex at 0, with remainders suitably bounded in terms of a sequence $\M=(M_p)_{p\in\N_0}$ of positive real numbers: for every bounded and proper subsector $T$ of $G$, there
exist $C_T,A_T>0$ such that for every $p\in\N_0$ and $z\in T$, one has
\begin{equation*}\Big|f(z)-\sum_{k=0}^{p-1}a_kz^k \Big|\le C_TA_T^pM_{p}|z|^p.
\end{equation*}

We will mostly consider logarithmically convex sequences $\M$ with quotients $m_p:=M_{p+1}/M_p$ tending to infinity, and frequently our attention will focus on
strongly regular sequences as defined by V. Thilliez~\cite{thilliez},
of which the best known example is that of Gevrey classes, appearing when the sequence is chosen to be $(p!^{\a})_{p\in\N_{0}}$, $\a>0$.

The second author introduced in~\cite{SanzFlat} the constant \begin{equation*}
\omega(\M)=\liminf_{p\to\infty}
\frac{\log(m_{p})}{\log(p)}\in(0,\infty),
\end{equation*}
measuring the rate of growth of any strongly regular sequence $\M$. Whenever the associated function $d_{\M}(t)=\log(M(t))/\log t$, where
\begin{equation}\label{eqDefinMdet}
M(t):=\sup_{p\in\N_{0}}\log\big(\frac{t^p}{M_{p}}\big)
,\quad t>0,
\end{equation}
is a nonzero proximate order, one can provide nontrivial flat functions in optimal sectors of opening $\pi\,\omega(\M)$, and this is the crucial point for the success in putting forward a satisfactory summability theory in $\tilde{\mathcal{A}}_{\M}(G)$ (see~\cite{lastramaleksanz3}). So, it seemed important to characterize the fact that $d_{\M}$ is a nonzero proximate order in a simple way, what was achieved in the paper~\cite{JimenezSanz} by the first two authors. The main result was the following.

\begin{theo}[\cite{JimenezSanz},\ Th.\ 3.14]\label{theorem.condition3.implies.beta.converge}
    Let $\M$ be a strongly regular sequence, then the following are equivalent:
\begin{enumerate}[(i)]
 \item $d_{\M}(t)$ is a proximate order,
 \item $\lim_{t\ri\oo}d_{\M}(t)=1/\o(\M)$,
 \item $\lim_{p\ri\oo} \log(m_p)/\log(p)=\o(\M)$,
 \item  $\lim_{p\ri\oo} \log\big(m_p/M_p^{1/p}\big)=\o(\M)$.
\end{enumerate}
\end{theo}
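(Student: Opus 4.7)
I plan to prove the cycle (i)$\Rightarrow$(ii)$\Rightarrow$(iii)$\Rightarrow$(iv)$\Rightarrow$(i), with the explicit piecewise-affine description of $M(t)$ as the driving tool. For $m_{p-1}<t\le m_p$ the supremum defining $M(t)$ in (\ref{eqDefinMdet}) is attained at that index $p$, so
$$M(t)=p\log t-\log M_p,\qquad M(m_p)=p\,\beta_p,\quad \beta_p:=\log\bigl(m_p/M_p^{1/p}\bigr).$$
Hence $d_\M(m_p)=(\log p+\log\beta_p)/\log m_p$, and a direct computation on each open interval gives
$$t\,d_\M'(t)\log t=\frac{p}{M(t)}-d_\M(t).$$
Every implication of the theorem can be read off from these two formulas.

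\textbf{Easy steps.} (i)$\Rightarrow$(ii) is immediate from the definition of proximate order, and the limit value is forced to be $1/\o(\M)$ by matching with the $\liminf$ in the definition of $\o(\M)$. For (iv)$\Rightarrow$(iii) I use Stolz-Cesaro: writing $y_p=\frac{1}{p+1}\sum_{k=0}^p\log m_k$, condition (iv) reads $\log m_p-y_{p-1}\to\o(\M)$, and substituting into the recursion $y_p=y_{p-1}+(\log m_p-y_{p-1})/(p+1)$ gives $y_p-y_{p-1}=(\o(\M)+o(1))/(p+1)$, which telescopes to $y_p\sim\o(\M)\log p$, and hence $\log m_p\sim\o(\M)\log p$.

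\textbf{Harder steps and main obstacle.} (ii)$\Rightarrow$(iii) proceeds through the Young-conjugate identity $\log M_p=\sup_t(p\log t-M(t))$: (ii) makes $M(t)$ regularly varying of index $1/\o(\M)$, and inverting via this conjugacy yields $\log m_p\sim\o(\M)\log p$. The implication (iv)$\Rightarrow$(i) is then a direct reading of the derivative formula: (iv) and the already-established (iii) give $p/M(t)=1/\beta_p+o(1)\to 1/\o(\M)$ and $d_\M(t)\to 1/\o(\M)$, whence $t\,d_\M'(t)\log t\to 0$, i.e., $d_\M$ is a proximate order. The main obstacle is the step (iii)$\Rightarrow$(iv): promoting the first-order asymptotic $\log m_p\sim\o(\M)\log p$ to the pointwise convergence $\beta_p\to\o(\M)$ requires quantitative control of the slowly varying factor of $M(t)$. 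I expect to handle this via a Karamata-type representation theorem combined with the structural assumptions of strong regularity — in particular, moderate growth and strong non-quasianalyticity — which are precisely what is needed to rule out the oscillatory corrections that would otherwise allow $\beta_p$ to wander inside the interval given by $\liminf$ and $\limsup$ of $\log(m_p/M_p^{1/p})$.
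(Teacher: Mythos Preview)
There is a genuine gap, and it is precisely where you suspected: the implication (iii)$\Rightarrow$(iv) is \emph{false}. The theorem as stated is quoted from an earlier paper of the first two authors, and the present paper explains that the original proof was erroneous exactly at this step. Example~\ref{example.dM.not.p.o} constructs an explicit strongly regular sequence $\M$ equivalent to the Gevrey sequence $(p!)_{p\in\N_0}$ (so $\log(m_p)/\log(p)\to 1=\o(\M)$ and (iii) holds) but for which $\lim_{p\to\infty} m_{2p}/m_p$ does not exist; hence $\m$ is not regularly varying and, by Proposition~\ref{prop.beta.regularvariation.bis}, $\b_p$ does not converge. Strong regularity is \emph{not} enough to ``rule out the oscillatory corrections'' you hoped to exclude: the quotients $m_p$ can oscillate within a bounded factor of $p^{\o(\M)}$ in a way that preserves (lc), (mg) and (snq) while preventing $\b_p$ from settling down.

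The correct picture, spelled out in Remark~\ref{remaCorrections} and Theorem~\ref{theorem.charact.prox.order.nonzero}, is that for strongly regular $\M$ one has (i)$\Leftrightarrow$(iv) and (ii)$\Leftrightarrow$(iii), but (i)/(iv) is strictly stronger than (ii)/(iii). Your arguments for (i)$\Rightarrow$(ii), (iv)$\Rightarrow$(iii) and (iv)$\Rightarrow$(i) are essentially on target --- the paper's proof of (b)$\Rightarrow$(a) in Theorem~\ref{theorem.charact.prox.order.nonzero} follows the same derivative-formula computation you sketched --- but no Karamata/Tauberian argument will complete (iii)$\Rightarrow$(iv), because the conclusion is simply not true.
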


These conditions are satisfied for every strongly regular sequence appearing in the applications (ODEs, PDEs and difference equations; see the introduction of \cite{JimenezSanz} and the references therein for a non-complete account). However, it was not clear for us whether these conditions held true for any strongly regular sequence, so we investigated on some pathological examples. To our surprise, we found an example (see Example~\ref{example.dM.not.p.o} in this paper) satisfying (iii) but not (iv). It turns out that in our (wrong) proof that (iii) implies (iv), a very recent criterion by F. Moricz~\cite{Moricz} for the convergence of a sequence summable by Riesz means played a prominent role.

\begin{theo}[\cite{Moricz},\ Th.\ 5.1]\label{theoMoricz}
If a sequence $(s_k)_{k\in\N}$ of real numbers is Riesz-summable to some $A\in\R$, then the ordinary limit exists (with the same value) if, and only if,
\begin{equation}\label{eqmoricz1}
 \limsup_{\lambda\ri1^+}\liminf_{p\ri\oo} \frac{1}{(\lfloor p^\lambda\rfloor-p)H_p}\sum_{k=p+1}^{\lfloor p^\lambda\rfloor}\frac{s_k-s_p}{k}\geq 0
\end{equation}
and
\begin{equation}\label{eqmoricz2}
 \limsup_{\lambda\ri1^-}\liminf_{p\ri\oo} \frac{1}{(p-\lfloor p^\lambda\rfloor )H_p}\sum^{p}_{k= \lfloor p^\lambda\rfloor+1}\frac{s_p-s_k}{k}\geq 0,
\end{equation}
where $\lfloor \cdot \rfloor$ denotes the integer part, and $H_p$ is the $p-$th partial sum of the harmonic series.

\end{theo}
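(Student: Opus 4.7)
Necessity is straightforward. If $s_p\to A$, then for every $\varepsilon>0$ and all sufficiently large $p$ one has $|s_k-s_p|<\varepsilon$ for $k\ge p$, so with $q:=\lfloor p^\lambda\rfloor$ the inner sum in (\ref{eqmoricz1}) is bounded in absolute value by $\varepsilon(H_q-H_p)$. Dividing by $(q-p)H_p$ and using $(H_q-H_p)/((q-p)H_p)\to 0$ as $p\to\infty$, the quantity under the $\liminf_p$ tends to $0$; hence $\liminf_p(\cdot)\ge 0$ for every $\lambda>1$, so (\ref{eqmoricz1}) holds. The reasoning for (\ref{eqmoricz2}) is identical.

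For sufficiency, set $t_p:=H_p^{-1}\sum_{k=1}^p s_k/k$ and assume $t_p\to A$ together with (\ref{eqmoricz1})--(\ref{eqmoricz2}). The starting point is the elementary telescoping identity, valid for every $q>p$,
\[
\sum_{k=p+1}^{q}\frac{s_k-s_p}{k}=H_q t_q-H_p t_p-s_p(H_q-H_p),
\]
which rearranges as
\[
s_p-\frac{H_q t_q-H_p t_p}{H_q-H_p}=-\frac{1}{H_q-H_p}\sum_{k=p+1}^{q}\frac{s_k-s_p}{k}.
\]
Setting $q=\lfloor p^\lambda\rfloor$ and invoking the harmonic asymptotics $H_n=\log n+\gamma+O(1/n)$, one has $H_q/H_p\to\lambda$, so the weighted mean $(H_q t_q-H_p t_p)/(H_q-H_p)$ is a bounded convex combination of $t_q$ and $t_p$; both tend to $A$, so it tends to $A$ as well. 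Therefore the $\liminf_p$ of the right-hand side coincides with $A-\limsup_p s_p$. Once the condition (\ref{eqmoricz1}) has been translated from its normalization $(q-p)H_p$ to the natural one $H_q-H_p$ appearing here, it forces $\limsup_p s_p\le A$; symmetrically, (\ref{eqmoricz2}) with $q<p$ yields $\liminf_p s_p\ge A$, and hence $s_p\to A$.

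The main obstacle is exactly this translation between normalizations: since $H_q-H_p\sim(\lambda-1)\log p$ while $(q-p)H_p\sim p^\lambda\log p$ are of radically different order, no direct one-step comparison is available. My plan is to dissect the long block $[p,\lfloor p^\lambda\rfloor]$ into many shorter sub-blocks of ``unit logarithmic width'' on which the hypothesis can be applied with some $\lambda'$ close to $1$, and then recombine by summation by parts, using the outer $\limsup_{\lambda\to 1^\pm}$ -- which appears to be tailored precisely to make such a telescoping possible -- to extract the Tauberian conclusion from the prescribed one-sided averages.
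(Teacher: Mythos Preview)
There is no proof in the paper to compare against. The paper merely quotes this result from \cite{Moricz} and then, in the paragraph immediately following, explicitly remarks that \emph{the statement as quoted is incorrect}: the denominators $(\lfloor p^\lambda\rfloor-p)H_p$ and $(p-\lfloor p^\lambda\rfloor)H_p$ must be replaced by $H_{\lfloor p^\lambda\rfloor}-H_p$ and $H_p-H_{\lfloor p^\lambda\rfloor}$, respectively.

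You have in fact located the precise point of failure yourself. Your telescoping identity naturally produces the denominator $H_q-H_p\sim(\lambda-1)\log p$, whereas the hypothesis is normalized by $(q-p)H_p\sim p^{\lambda}\log p$; as you say, these are of radically different orders. Your proposed rescue --- subdividing $[p,\lfloor p^\lambda\rfloor]$ into short sub-blocks and exploiting the outer $\limsup_{\lambda\to 1^{\pm}}$ --- cannot succeed, because the sufficiency direction is genuinely false with this normalization. Indeed, for any \emph{bounded} Riesz-summable sequence the inner sum is $O(H_q-H_p)$, so after division by $(q-p)H_p$ the quantity under $\liminf_p$ tends to $0$ for every fixed $\lambda$; hence (\ref{eqmoricz1}) and (\ref{eqmoricz2}) hold trivially, yet bounded Riesz-summable sequences need not converge. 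This defect in Moricz's formulation is exactly what led the present authors to discover that several implications in their earlier work were wrong (see Remark~\ref{remaCorrections} and Example~\ref{example.dM.not.p.o}). Your necessity argument is fine, but there is nothing to be salvaged on the sufficiency side.
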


By carefully inspecting this result, we noticed that its statement is not correct, although a right one may be deduced from the final lemma 5.5 in Moricz's paper, where the expressions in \eqref{eqmoricz1} and \eqref{eqmoricz2} are rewritten. Indeed, they should read as follows:
\begin{equation*}
 \limsup_{\lambda\ri1^+}\liminf_{p\ri\oo} \frac{1}{H_{\lfloor p^\lambda\rfloor}-H_p}\sum_{k=p+1}^{\lfloor p^\lambda\rfloor}\frac{s_k-s_p}{k}\geq 0
\end{equation*}
and
\begin{equation*}
 \limsup_{\lambda\ri1^-}\liminf_{p\ri\oo} \frac{1}{H_p-H_{\lfloor p^\lambda\rfloor}}\sum^{p}_{k= \lfloor p^\lambda\rfloor+1}\frac{s_p-s_k}{k}\geq 0.
\end{equation*}

This fact made clear to us that several implications in Theorem~\ref{theorem.condition3.implies.beta.converge} were false. In Section~\ref{sectSequencesDefineProxOrder} the suitable corrections will be carefully described (see, in particular, Remark~\ref{remaCorrections}).

Apart from the necessity to recover from this mistake, there are some important points to note. Firstly, as indicated in~\cite[Remark\ 4.11(iii)]{SanzFlat}, in order to obtain the summability theory in $\tilde{\mathcal{A}}_{\M}(G)$ mentioned above, it is not crucial that $d_{\M}$ is a nonzero proximate order, but rather that\par
(*) there exist a nonzero proximate order $\rho(t)$ and constants $A,B>0$ such that
\begin{equation*}
A\le \log(t)\big(d_{\M}(t)-\rho(t)\big)\le B\qquad \textrm{for $t$ large enough,}
\end{equation*}
since these estimates allow for the obtention of the required kernels, integral transforms and asymptotic relations. Secondly, and in the same line of ideas, there is some flexibility in the definition of the space $\tilde{\mathcal{A}}_{\M}(G)$: If $\L=(L_p)_{p\in\N_0}$ is another sequence of positive real numbers and it is equivalent to $\M$ (in the sense that there exist $C,D>0$ such that $D^p L_p \leq M_p \leq C^p L_p$ for every $p \in \N_0$), then $\tilde{\mathcal{A}}_{\M}(G)=\tilde{\mathcal{A}}_{\L}(G)$. So, even if $d_{\M}$ is not a nonzero proximate order, it makes sense to wonder whether\par
(**) there exists a sequence $\L$ equivalent to $\M$ and such that $d_{\L}$ is a nonzero proximate order.

The main aim of this paper is to provide statements, as accurate as possible (in the sense that they impose the least restrictive hypotheses on the sequence $\M$), clarifying the equivalences or implications between the different properties (i), (ii), (iii), (iv), (*) and (**), together with the property of regular variation of the sequence of quotients, which already appeared in~\cite{JimenezSanz}. As a byproduct we obtain a new characterization of regularly varying sequences.

It will be specially interesting the fact that, following a classical idea of H. Komatsu~\cite{komatsu}, one can also go from nonzero proximate orders to well-behaved strongly regular sequences, so reversing the way from suitable sequences $\M$ to proximate orders $d_{\M}$. In this respect the results of L.S. Maergoiz~\cite{Maergoiz}, on the construction of holomorphic functions in sectors whose restriction to the positive real axis has a growth accurately specified by a given nonzero proximate order, will be extremely useful.

We will also show several examples that prove that, in some cases, our results are sharp. In particular, Example~\ref{example.dM.not.p.o} provides a strongly regular sequence not satisfying (iv) which, nevertheless, is equivalent to a Gevrey sequence (for which the corresponding function $d$ is known to be a proximate order). Example~\ref{exampleGdoesnotimplyE} gives a strongly regular sequence $\M$ for which the limit in (iii) exists and with equal indices $\o(\M)$ and $\ga(\M)$ (this last constant was introduced by V. Thilliez~\cite{thilliez}) and which, however, does not admit a proximate order. Finally, Example~\ref{exampleNoAdmiteOrden} shows a strongly regular sequence for which the limit in (iii) does not exist, so solving another open question.

\section{Preliminaries}

This section is devoted to provide all the necessary information regarding proximate orders and logarithmically convex sequences.

\subsection{Proximate orders}

We recall the notion of proximate orders, appearing in the theory of growth of entire functions and developed, among others, by E. Lindel\"of, G. Valiron, B.Ja. Levin, A.A. Goldberg, I.V. Ostrosvkii and L.S. Maergoiz (see the references~\cite{Valiron42,Levin,GoldbergOstrowskii,Maergoiz}).

\begin{den}\label{OAD:1}
We say a real function $\ro(r)$ defined on $(c,\oo)$  is a {\it proximate order},
if the following hold:
 \begin{enumerate}[(A)]
  \item $\ro$ is continuous and piecewise continuously differentiable in $(c,\oo)$ (meaning that it is differentiable except possibly at a sequence of points, tending to infinity, at any of which it is continuous and has distinct finite lateral derivatives),\label{OA1:1}
  \item $\ro(r) \geq 0$ for every $r>0$,\label{OA2:1}
  \item $\lim_{r \ri \oo} \ro(r)=\ro< \oo$, \label{OA3:1}
  \item $\lim_{r  \ri \oo} r \ro'(r) \log(r) = 0$. \label{OA4:1}
 \end{enumerate}
In case the value $\rho$ in (C) is positive (respectively, is 0), we say $\rho(r)$ is a \textit{nonzero} (resp. \textit{zero}) proximate order.
\end{den}

\begin{den}\label{defiEquivalentProxOrders}
Two proximate orders $\ro_1(r)$ and $\ro_2(r)$ are said to be {\it equivalent} if
 \begin{equation}
  \lim_{r \ri \oo} (\ro_1(r) - \ro_2(r)) \log(r) = 0 . \no
 \end{equation}
\end{den}

\begin{rema}\label{remaEquivProxOrder}
The equivalence means precisely that the functions $V_1(r)=r^{\ro_1(r)}$ and $V_2(r)=r^{\ro_2(r)}$ are equivalent in the classical sense, i.e.,
$$\lim_{r\ri\oo} \frac{V_1(r)}{V_2(r)}=
\lim_{r\ri\oo}\frac{r^{\ro_1(r)}}{r^{\ro_2(r)}} = 1.$$
Moreover, it implies that $\lim_{r\ri\oo}\ro_1(r) = \lim_{r\ri\oo}\ro_2(r)$, and so they are simultaneously nonzero or not.
\end{rema}

\begin{exam}\label{exampleOrders}
The following are examples of proximate orders, when defined in suitable intervals $(c,\infty)$:
\begin{itemize}
\item[(i)] $\ro_{\a,\b}(t)=\displaystyle\frac{1}{\a}-\frac{\b}{\a}\frac{\log(\log(t))}{\log(t)}$, $\a>0$, $\b\in\R$.
\item[(ii)] $\rho(t)=\rho+\displaystyle\frac{1}{t^\ga}$, $\ro\ge 0$, $\ga>0$.
\item[(iii)] $\rho(t)=\rho+\displaystyle\frac{1}{\log^\ga(t)}$, $\ro\ge 0$, $\ga>0$.
\end{itemize}
An example of a function verifying all the conditions except (D) is $\ro(t)=\ro+\sin(t)/t$.
\end{exam}

The following statement establishes an important connection, as it will be seen in the forthcoming results.

\begin{prop}[\cite{bingGoldTeug}, Prop.\ 7.4.1]\label{propRegVarOrdAprox}
Let $\ro(r)$ be a proximate order. Then, the function $V(r)=r^{\ro(r)}$ is \textit{regularly varying}, that is,
\begin{equation*}
  \lim_{r \to \infty} \frac{V(tr)}{V(r)}= t^{\ro},
  \end{equation*}
uniformly in the compact sets of $(0,\infty)$.
\end{prop}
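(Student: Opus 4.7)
The plan is to work with $\log V(r)=\ro(r)\log r$ and prove uniform convergence of the difference $\log V(tr)-\log V(r)$ to $\ro\log t$ on compact subsets of $(0,\oo)$. Splitting
\begin{equation*}
\log V(tr)-\log V(r)=\big(\ro(tr)-\ro(r)\big)\log r + \ro(tr)\log t,
\end{equation*}
condition (C) of Definition \ref{OAD:1} immediately gives $\ro(tr)\log t\to\ro\log t$ uniformly for $t$ in any compact $K\en(0,\oo)$, since $tr\ri\oo$ uniformly on such $K$. The core of the argument is therefore to show that the first summand tends to $0$ uniformly on $K$.

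For this I would use property (A), which makes $\ro$ absolutely continuous on each compact subinterval of $(c,\oo)$: the points where $\ro'$ fails to exist form a discrete set (measure zero), at which $\ro$ is continuous with finite lateral derivatives. Hence, for $r$ large enough, I may write
\begin{equation*}
\big(\ro(tr)-\ro(r)\big)\log r=\int_r^{tr}\ro'(s)\log r\,ds=\int_1^t r\ro'(ru)\log r\,du,
\end{equation*}
after the substitution $s=ru$. Given $\ep>0$, property (D) furnishes $R$ such that $|s\ro'(s)\log s|<\ep$ for every $s>R$. Fixing $K=[1/A,A]$ with $A>1$ and taking $r$ so large that $r/A>R$, I rewrite the integrand as
\begin{equation*}
r\ro'(ru)\log r=\frac{ru\,\ro'(ru)\log(ru)}{u}\cdot\frac{\log r}{\log r+\log u},
\end{equation*}
whose first factor is bounded by $\ep/u$ in absolute value, and whose second factor tends to $1$ uniformly in $u\in K$ (since $|\log u|\le\log A$). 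Therefore, eventually in $r$,
\begin{equation*}
\Big|\int_1^t r\ro'(ru)\log r\,du\Big|\le 2\ep\,\Big|\int_1^t\frac{du}{u}\Big|\le 2\ep\log A,
\end{equation*}
uniformly in $t\in K$. Exponentiating gives the desired conclusion.

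The main obstacle, such as it is, lies in the careful handling of the uniformity: one must control the factor $\log r/(\log r+\log u)$ simultaneously for all $u$ in the compact set (harmless, as just noted), and when $t<1$ one reads the integral in the reverse direction, which only affects signs and does not change the estimate. No conceptual difficulty beyond this is anticipated; the argument is essentially the classical proof that $r^{\ro(r)}$ is a regularly varying function of index $\ro$ whenever $\ro$ is a proximate order.
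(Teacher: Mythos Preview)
Your argument is correct and is essentially the classical proof that $r^{\ro(r)}$ is regularly varying of index $\ro$ for any proximate order $\ro(r)$. Note, however, that the paper does not supply its own proof of this proposition: it is quoted as \cite[Prop.\ 7.4.1]{bingGoldTeug} and used as a black box, so there is no in-paper argument to compare yours against. Your write-up would serve perfectly well as a self-contained justification; the only cosmetic point is that the bound ``$\le 2$'' on the factor $\log r/(\log r+\log u)$ should perhaps be stated explicitly (e.g., for $r$ large enough that $\log r>2\log A$), but this is implicit in what you wrote.
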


\begin{rema}\label{remaInverseRegularlyVarying}
Suppose $\ro(r)$ $(r\geq c\geq0)$ is a proximate order tending to $\ro>0$ at infinity. Then the function $V(r):=r^{\ro(r)}$ is strictly increasing for $r>R$, where $R$ is large enough.
The inverse function $r=U(s)$, $s>V(R)$, has the property that the function $\ro^*(s)=\log( U(s) )/ \log (s)$ is a proximate order
and $\ro^*(s)\ri 1/\ro$ as $s\ri\oo$ (see Property 1.8 in~\cite{Maergoiz}). This $\ro^*(s)$ is called the \textit{proximate order conjugate to $\ro(r)$}. Note that, by Proposition~\ref{propRegVarOrdAprox}, the function $U$ is regularly varying.
\end{rema}

Let $\ga$ be a positive real number. We consider the regions in the Riemann surface of the logarithm $\Ri$ given by
  \begin{equation*}
   L(\ga)=\{ (r,\th) \in \Ri : |\theta|<\ga,\ r>0 \}.
  \end{equation*}

The following result of L.S. Maergoiz will be important later on. For an
arbitrary sector bisected by the positive real axis, it provides holomorphic functions whose
restriction to $(0,\oo)$ is real and has a growth at infinity specified by a prescribed nonzero proximate
order. These functions were used to construct nontrivial flat functions and kernels of summability in Carleman ultraholomorphic classes by the second author (see~\cite{SanzFlat}).

\begin{theo}[\cite{Maergoiz}, Th.\ 2.4]\label{theorpropanalproxorde}
Let $\ro(r)$ be a proximate order with $\ro(r)\to\ro>0$ as $r\to \infty$. For every $\ga>0$ there exists an analytic
function $V(z)$ in $L(\ga)$ such that:
  \begin{enumerate}[(I)]
  \item  For every $z \in L(\ga)$,
 \begin{equation*}
    \lim_{r \to \infty} \frac{V(zr)}{V(r)}= z^{\ro},
  \end{equation*}
uniformly in the compact sets of $L(\ga)$.
\item $\overline{V(z)}=V(\overline{z})$ for every $z \in L(\ga)$, where, for $z=(|z|,\arg(z))$, we
put $\overline{z}=(|z|,-\arg(z))$.
\item $V(r)$ is positive in $(0,\infty)$, strictly increasing and $\lim_{r\to 0}V(r)=0$.
\item The function $t\in\R\to V(e^t)$ is strictly convex (i.e. $V$ is strictly convex relative to $\log(r)$).
\item The function $\log(V(r))$ is strictly concave in $(0,\infty)$.
\item  The function $\ro_V(r):=\log( V(r))/\log(r)$, $r>0$, is a proximate order equivalent to $\ro(r)$.
    \end{enumerate}
\end{theo}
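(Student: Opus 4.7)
The plan is to reduce the construction to the real line via the logarithmic change of variable and then transfer back by exponentiation. Set $W_0(u):=u\,\ro(e^u)$, so that $V_0(r):=r^{\ro(r)}=\exp(W_0(\log r))$. Conditions (C) and (D) yield $W_0(u)/u\to\ro$ and $W_0'(u)=\ro(e^u)+e^u u\,\ro'(e^u)\to\ro$ as $u\to\infty$. After a harmless bounded modification, I may assume $W_0$ is piecewise $C^1$ and non-decreasing on all of $\R$, with $W_0(u)\to-\infty$ as $u\to-\infty$ (for instance by continuing $W_0$ linearly with positive slope on a left tail).

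Next, I analytically smooth $W_0$ on the strip $S_\ga:=\{z\in\C:|\Im z|<\ga\}$, onto which the principal logarithm maps $L(\ga)$ biholomorphically. Choose an even, positive, real-analytic convolution kernel $K_\ga$ on $\R$ with $\int_\R K_\ga=1$, whose holomorphic continuation is integrable on each horizontal line of $S_\ga$ (for instance a rescaled sech-type kernel). Define
$$
W(z):=\int_\R W_0(u)\,K_\ga(z-u)\,du,\qquad z\in S_\ga.
$$
Then $W$ is holomorphic on $S_\ga$, real on $\R$ by evenness of $K_\ga$ (so (II) follows from Schwarz reflection), and satisfies $W(u)/u\to\ro$ and $W'(u)\to\ro$; Cauchy's estimates propagate the latter convergence uniformly to each substrip $S_{\ga'}$ with $\ga'<\ga$. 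Setting $V(z):=\exp(W(\log z))$ on $L(\ga)$ produces an analytic function, and property (I) follows from $V(zr)/V(r)=\exp(W(\log z+\log r)-W(\log r))$ together with the uniform convergence $W'\to\ro$ on compacts of $L(\ga)$ after scaling. Property (VI) reduces to $\ro_V(r)=W(\log r)/\log r$: equivalence to $\ro$ amounts to $W(u)-W_0(u)\to 0$, which follows from dominated convergence in the identity $W(u)-W_0(u)=\int K_\ga(s)[W_0(u-s)-W_0(u)]\,ds$ using the evenness of $K_\ga$ (vanishing first moment) and the limit $W_0'\to\ro$; the proximate-order condition $r\ro_V'(r)\log r\to 0$ is then a direct computation using $W'(u)\to\ro$.

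For the geometric properties (III)–(V), arrange the smoothing so that $W'>0$ and $W''<W'$ hold throughout $\R$; both inequalities are asymptotically automatic, since $W'(u)\to\ro>0$ and $W''(u)\to 0$, and can be enforced globally by the bounded modification of $W_0$ and a sufficiently concentrated but analytic kernel. Then $V(r)=\exp(W(\log r))$ is positive, strictly increasing, and tends to $0$ at the origin, giving (III). Strict convexity of $V(e^t)=\exp(W(t))$ in $t$ (property (IV)) reduces to $(W')^2+W''>0$, which follows from $W'>0$. Strict concavity of $\log V(r)=W(\log r)$ in $r$ (property (V)) reduces via the identity $\tfrac{d^2}{dr^2}W(\log r)=(W''(\log r)-W'(\log r))/r^2$ to the inequality $W''<W'$.

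The main obstacle is the simultaneous triple demand on the smoothed function $W$: holomorphic continuation to a strip of width precisely $\ga$ (which restricts how slowly $K_\ga$ may decay in the imaginary direction), the sharp asymptotic $W(u)-W_0(u)\to 0$ required for the equivalence statement in (VI), and the two \emph{global} differential inequalities $W'>0$ and $W''<W'$ on all of $\R$. Balancing these constraints, while preserving the analyticity of $K_\ga$ in the strip, is the technical heart of Maergoiz's construction.
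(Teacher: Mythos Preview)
This theorem is quoted in the paper from Maergoiz's work and is not proved there, so there is no ``paper's own proof'' against which to compare your attempt.

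Your outline --- pass to the strip via $u=\log r$, smooth $W_0(u)=u\,\rho(e^u)$ by convolution with an even analytic kernel holomorphic on $S_\gamma$, then set $V=\exp(W\circ\log)$ --- is a natural strategy and captures the right shape of such a construction. However, as written it contains at least one concrete error and, by your own admission, substantial unfinished work. The error: you assert that strict convexity of $e^{W(t)}$, i.e.\ $(W')^2+W''>0$, ``follows from $W'>0$''. It does not; one needs $W''>-(W')^2$, which is independent of the sign of $W'$ and is not implied by the inequality $W''<W'$ you arrange for (V). Asymptotically $(W')^2+W''\to\rho^2>0$, so the difficulty is confined to a bounded region, but a genuine argument is still required there, and the bounded modification plus kernel concentration you invoke must be made precise enough to handle this third constraint simultaneously with the other two.

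More broadly, your final paragraph correctly identifies the crux --- making the analytic continuation to the full strip $S_\gamma$, the equivalence $W-W_0\to 0$, and the \emph{global} differential inequalities hold at once --- and then labels it ``the technical heart of Maergoiz's construction'' without carrying it out. What you have is therefore a roadmap rather than a proof; the hard analytic work that Maergoiz performs (and that the present paper simply cites) is precisely the part you have deferred.
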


 \begin{defi}\label{defClassMaergoiz}
 Let $\ga>0$ and $\ro(r)$ be a proximate order, $\ro(r) \ri \ro >0$.  $\Bet(\ga,\ro(r))$ stands for the class of functions $V(z)$ defined in $L(\ga)$ satisfying the conditions (I)-(VI) of Theorem~\ref{theorpropanalproxorde}.

 \end{defi}

 \subsection{Logarithmically convex sequences}

In what follows, $\M=(M_p)_{p\geq 0}$ always stands for
a sequence of positive real numbers, and we always assume that $M_0=1$.

\begin{defi}\label{defPropertiesSequences}
We say:
\begin{itemize}
\item[(i)]  $\M$ is \textit{logarithmically convex} (for short, (lc)) if
$$M_{p}^{2}\le M_{p-1}M_{p+1},\qquad p\in\N.
$$

\item[(ii)]  $\M$ is of \textit{moderate growth} (briefly, (mg)) if there exists $A>0$ such that
$$M_{p+q}\le A^{p+q}M_{p}M_{q},\qquad p,q\in\N_0.$$

\item[(iii)]  $\bM$ satisfies the \textit{strong non-quasianalyticity condition} (for short, (snq)) whenever there exists $B>0$ such that
$$
\sum_{q\ge p}\frac{M_{q}}{(q+1)M_{q+1}}\le B\frac{M_{p}}{M_{p+1}},\qquad p\in\N_0.
$$
\end{itemize}
\end{defi}

\begin{defi} For a sequence $\M$ we define {\it the sequence of quotients} $\m=(m_p)_{p\in\N_0}$ by $$m_p:=\frac{M_{p+1}}{M_p} \qquad p\in \N_0.$$
\end{defi}

Although in many of our results we depart from a (lc) sequence with quotients tending to infinity, we will frequently deduce that we are dealing with strongly regular sequences.

\begin{defi}[\cite{thilliez}]
A sequence $\M$ is {\it strongly regular} if it verifies the properties (i), (ii) and (iii) in Definition~\ref{defPropertiesSequences}.
\end{defi}

\begin{rema}\label{remaequivMm}
Observe that for every $p\in\N$ one has
\begin{equation}\label{eqMfromm}
M_p=\frac{M_p}{M_{p-1}}\frac{M_{p-1}}{M_{p-2}}\dots\frac{M_2}{M_{1}}\frac{M_1}{M_{0}}= m_{p-1}m_{p-2}\dots m_1m_0.
\end{equation}
So, one may recover the sequence $\M$ (with $M_0=1$) once $\bm$ is known, and hence the knowledge of one of the sequences amounts to that of the other.
Sequences of quotients of sequences $\M$, $\L$, etc. will be denoted by lowercase letters $\bm$, $\bl$ and so on. Whenever some statement
refers to a sequence denoted by a lowercase letter such as $\bm$, it will be understood that we are dealing with a sequence of quotients (of the sequence $\M$ given by \eqref{eqMfromm}).
\end{rema}

The following properties are easy consequences of the definitions, except for (ii.2),
which is due to H.-J. Petzsche and D. Vogt~\cite[Lemma\ 5.3]{petvog}, and for (iii), which is due to H.-J. Petzsche~\cite[Prop.\ 1.1]{Pet}.

\begin{prop}\label{propPropiedlcmg}
Let $\M=(M_p)_{p\in\N_0}$ be a sequence. Then, we have:
\begin{itemize}
\item[(i)] $\M$ is (lc) if, and only if, $\bm$ is nondecreasing. If, moreover, $\M$ satisfies (snq) then $\bm$ tends to infinity.
\item[(ii)] Suppose that $\M$ is (lc). Then,
\begin{itemize}
\item[(ii.1)]  $(M_p^{1/p})_{p\in\N}$ is nondecreasing, and $M_p^{1/p}\leq m_{p-1}$ for every $p\in\N$.\par
    Moreover, $\lim_{p\to\infty}m_p=\infty$ if, and only if, $\lim_{p\to\infty}M_p^{1/p}=\infty$.

\item[(ii.2)] The following statements are equivalent:
\begin{itemize}
  \item[(ii.2.a)]  $\M$ is (mg),
  \item[(ii.2.b)] $\sup_{p\in\N} m_p/M^{1/p}_p <\infty$,
  \item[(ii.2.c)]  $\sup_{p\in\N_0} m_{2p}/m_{p}<\infty$,
  \item[(ii.2.d)] $\sup_{p\in\N} \left(M_{2p}/M_p^2\right)^{1/p}<\infty$.
 \end{itemize}
\item[(ii.3)] If  $\M$ is (mg) and $A>0$ is the corresponding constant, then
\begin{equation*}
m_p^p\le A^{2p}M_p,\quad p\in\N_0.
\end{equation*}
\end{itemize}
\item[(iii)] If $(p!M_p)_{p\in\N_0}$ is (lc), then
 the following statements are equivalent:
 \begin{itemize}
 \item[(iii.1)] $\M$ verifies (snq).
 \item[(iii.2)] There exists $k\in\N$, $k\ge 2$, such that
 \begin{equation*}
  \liminf_{p\ri\oo} \frac{m_{kp}}{m_p}>1.
 \end{equation*}
 \end{itemize}
\end{itemize}
\end{prop}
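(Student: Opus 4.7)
The plan is to treat parts (i), (ii.1) and (ii.3) directly from the definitions, and to invoke the cited works for the deeper statements (ii.2) and (iii).

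First I would handle (i). The inequality $M_p^2\le M_{p-1}M_{p+1}$ rearranges immediately to $m_p\ge m_{p-1}$, so (lc) is synonymous with $\bm$ being nondecreasing. Assuming in addition (snq), suppose for contradiction that $\bm$ were bounded; being nondecreasing, it would converge to some finite $L$, and then $\sum_{q\ge p} M_q/((q+1)M_{q+1})=\sum_{q\ge p}1/((q+1)m_q)$ would diverge as a tail of a multiple of the harmonic series, contradicting the (snq) bound $B/m_p<\infty$. Hence $m_p\to\infty$.

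Next, for (ii.1), the identity $M_p=m_0m_1\cdots m_{p-1}$ from Remark~\ref{remaequivMm} identifies $M_p^{1/p}$ as the geometric mean of the first $p$ quotients. Since by (i) this finite list is nondecreasing, its geometric mean is dominated by the largest term $m_{p-1}$, proving the bound $M_p^{1/p}\le m_{p-1}$. Monotonicity of $(M_p^{1/p})_p$ reduces after clearing exponents to $M_p^{1/p}\le m_p$, which is weaker than that bound. The equivalence between $\lim m_p=\infty$ and $\lim M_p^{1/p}=\infty$ is then immediate: one direction follows from $m_{p-1}\ge M_p^{1/p}$; for the other, if $m_p\ge K$ eventually, then $M_p\ge C K^{p-p_0}$ for a suitable constant, so $M_p^{1/p}\to\infty$. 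For (ii.3) I would sandwich $M_{2p}$: (mg) gives $M_{2p}\le A^{2p}M_p^2$, while the monotonicity of $\bm$ supplies $M_{2p}/M_p=m_pm_{p+1}\cdots m_{2p-1}\ge m_p^p$, and combining these two estimates yields $m_p^p\le A^{2p}M_p$.

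For the remaining statements I would simply cite \cite[Lemma~5.3]{petvog} for (ii.2) and \cite[Prop.~1.1]{Pet} for (iii), whose proofs lie well beyond a routine rearrangement. The only real obstacle in the elementary parts is notational bookkeeping, namely keeping track that $M_p$ is a product of \emph{$p$} quotients ending at $m_{p-1}$, so that the relevant geometric mean sits at $m_{p-1}$ rather than $m_p$; once this is straight, every elementary step is essentially a one-line manipulation.
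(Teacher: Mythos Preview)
Your proposal is correct and matches the paper's approach: the paper does not supply a proof for this proposition, merely remarking that (i), (ii.1) and (ii.3) are ``easy consequences of the definitions'' and attributing (ii.2) to \cite[Lemma~5.3]{petvog} and (iii) to \cite[Prop.~1.1]{Pet}. You have filled in exactly those elementary arguments and cited the same sources for the nontrivial parts, so there is nothing to correct or compare.
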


In the next definitions and results we take into account the conventions adopted in Remark~\ref{remaequivMm}.

\begin{defi} Let $\M=(M_p)_{p\in\N_0}$ and $\L=(L_p)_{p\in\N_0}$ be sequences, we say that {\it $\M$ is equivalent to $\L$},
and we write $\M\approx\L$, if there exist $C,D>0$ such that
 $$D^p L_p \leq M_p \leq C^p L_p, \qquad\, p \in \N_0.$$
\end{defi}

\begin{defi} Let $\bm=(m_p)_{p\in\N_0}$ and $\bl=(\ell_p)_{p\in\N_0}$ be sequences,
we say that {\it $\bm$ is equivalent to $\bl$}, and we write $\bm\simeq\bl$, if there exist $c,d>0$ such that
 $$d  \ell_p \leq m_p \leq c \ell_p, \qquad p \in \N_0.$$
\end{defi}

The following statements are straightforward.

\begin{prop}\label{propRelacionOrdenes}
Let $\M$ and $\L$ be sequences.
\begin{itemize}
\item[(i)] If  $\bm\simeq\bl$ then  $\M\approx\L$.
\item[(ii)] If $\M$ and $\L$ are (lc) and one of them is (mg), then $\M\approx\L$ amounts to $\bm\simeq\bl$.
In particular, for strongly regular sequences one may equally use $\simeq$ and $\approx$.
\end{itemize}
\end{prop}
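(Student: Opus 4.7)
The plan is to treat the two assertions separately, with (i) being a direct computation and (ii) reducing to the same computation once one sets up the correct comparison between $m_p$ and $M_p^{1/p}$.

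For (i), I would just exploit the telescoping identity $M_p=m_{p-1}m_{p-2}\cdots m_0$ recalled in \eqref{eqMfromm}. From $d\,\ell_p\le m_p\le c\,\ell_p$ for all $p\in\N_0$, multiplying the $p$ inequalities for indices $0,1,\dots,p-1$ yields $d^p L_p\le M_p\le c^p L_p$, which is exactly the definition of $\M\approx\L$ (with $D:=d$ and $C:=c$).

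For (ii), the forward direction is (i). For the reverse, I first observe that the property (mg) is preserved under $\approx$: from $D^p L_p\le M_p\le C^p L_p$ and $M_{p+q}\le A^{p+q}M_p M_q$ one gets $L_{p+q}\le (AC/D)^{p+q}L_p L_q$, so if one of $\M,\L$ is (mg) then both are. Consequently both $\M$ and $\L$ are (lc) and (mg). I would then show the crucial comparison
\[
M_p^{1/p}\le m_p\le K\,M_p^{1/p}\qquad (p\ge 1)
\]
for any (lc)+(mg) sequence $\M$, and similarly for $\L$. The lower bound is Proposition~\ref{propPropiedlcmg}(ii.1) combined with the monotonicity of $\bm$ (namely $M_p^{1/p}\le m_{p-1}\le m_p$), while the upper bound is exactly (ii.2.b) of the same proposition.

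Finally, from $D^p L_p\le M_p\le C^p L_p$ one has $D\,L_p^{1/p}\le M_p^{1/p}\le C\,L_p^{1/p}$, and combining with the two-sided comparison above applied to both $\M$ and $\L$ yields constants $c,d>0$ with $d\,\ell_p\le m_p\le c\,\ell_p$ for every $p\ge 1$. The index $p=0$ is handled trivially by enlarging the constants to cover the ratio $m_0/\ell_0$. This establishes $\bm\simeq\bl$. I do not expect any real obstacle: the entire argument is bookkeeping, and the only nontrivial ingredient is the equivalence $m_p\simeq M_p^{1/p}$ under (lc)+(mg), which is supplied directly by Proposition~\ref{propPropiedlcmg}.
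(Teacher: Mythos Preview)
Your proof is correct and provides exactly the details that the paper omits; the paper itself gives no proof of Proposition~\ref{propRelacionOrdenes}, stating only that ``the following statements are straightforward.'' Your argument is the natural one the authors presumably had in mind, relying precisely on the telescoping formula~\eqref{eqMfromm} and the two-sided estimate $M_p^{1/p}\le m_p\le K\,M_p^{1/p}$ supplied by Proposition~\ref{propPropiedlcmg}(ii.1) and (ii.2.b).
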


\begin{exam}\label{exampleSequences}
We mention some interesting examples. In particular, those in (i) and (iii) appear in the applications of summability theory to the study of formal power series solutions for different kinds of equations.
\begin{itemize}
\item[(i)] The sequences $\M_{\a,\b}:=\big(p!^{\a}\prod_{m=0}^p\log^{\b}(e+m)\big)_{p\in\N_0}$, where $\a>0$ and $\b\in\R$, are strongly regular (in case $\b<0$, the first terms of the sequence have to be suitably modified in order to ensure (lc)). In case $\b=0$, we have the best known example of strongly regular sequence, $\M_{\a,0}=(p!^{\a})_{p\in\N_{0}}$, called the \textit{Gevrey sequence of order $\a$}.
\item[(ii)] The sequence $\M_{0,\b}:=(\prod_{m=0}^p\log^{\b}(e+m))_{p\in\N_0}$, with $\b>0$, is (lc), (mg) and $\bm$ tends to infinity, but (snq) is not satisfied.
\item[(iii)] For $q>1$, $\M_q:=(q^{p^2})_{p\in\N_0}$ is (lc) and (snq), but not (mg).
\end{itemize}
\end{exam}

\section{Sequences that define a nonzero proximate order}\label{sectSequencesDefineProxOrder}

Our first results, gathered in this section, characterize those sequences for which one can define, in a straightforward and natural way, a nonzero proximate order.

\subsection{Regularly varying sequences}

There exists a deep connection between the notions of proximate order and regular variation, as it may be seen in the classical work of N.H.~Bingham, C.M.~Goldie and J.L.~Teugels~\cite[Ch.\ 7]{bingGoldTeug}. This fact gave us the inspiration for our next results.
We will use the main statements about regularly varying sequences, taken from the paper of R. Bojanic and E. Seneta~\cite{BojanicSeneta}.

\begin{den}[\cite{BojanicSeneta}]
A sequence $(s_p)_{p\in\N_0}$ of positive numbers is \textit{regularly varying} if
\begin{equation}\label{equation.def.SRV}
\lim_{p\ri\oo} \frac{ s_{\lfloor \lambda p\rfloor}}{s_p}=\psi(\lambda) \in (0,\oo)
\end{equation}
 for every $\lambda>0.$
\end{den}

Regularly varying sequences admit a very convenient representation.

\begin{theo}[\cite{BojanicSeneta},\ Th.\ 1 and 3]\label{theo.BojanicSeneta}
If $(s_p)_{p\in\N}$ is a regularly varying sequence, there exists a real number $\o$ (called the \textit{index of regular variation} of $(s_p)_{p\in\N}$) such that $\psi(\lambda)=\lambda^\o$ (see~\eqref{equation.def.SRV}). Moreover, there exist sequences of positive numbers
$(C_p)_{p\in\N}$ and $(\delta_p)_{p\in\N}$, converging to $C\in(0,\oo)$ and zero, respectively, such that
$$s_p=p^{\o} C_p \exp  \left(\sum^{p}_{j=1}\delta_j/j\right),\quad p\in\N. $$
Conversely, such a representation for a sequence $(s_p)_{p\in\N}$ implies it is regularly varying of index $\o$.
\end{theo}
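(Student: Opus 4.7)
First, I would establish that the limit function $\psi$ must be a power $\lambda^{\o}$. The starting point is the functional equation $\psi(\lambda\mu)=\psi(\lambda)\psi(\mu)$, which comes from writing
\begin{equation*}
\frac{s_{\lfloor \lambda\mu p \rfloor}}{s_p} \;=\; \frac{s_{\lfloor \lambda\mu p \rfloor}}{s_{\lfloor \mu p \rfloor}}\cdot \frac{s_{\lfloor \mu p \rfloor}}{s_p}
\end{equation*}
and replacing $\lfloor \lambda\mu p\rfloor$ by $\lfloor \lambda\lfloor\mu p\rfloor\rfloor$ in the first factor. These two indices differ only by a bounded amount, so the argument reduces to the auxiliary fact $s_{q+O(1)}/s_q\ri 1$, which in turn follows from the continuity of $\psi$ at $1$. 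That continuity, together with the uniform convergence of $s_{\lfloor\lambda p\rfloor}/s_p$ on compact $\lambda$-intervals, can be extracted from the definition of regular variation by a classical compactness argument. Once the functional equation is in hand, $\psi$ is measurable (being a pointwise limit), and the Cauchy equation then forces $\log\psi(\lambda)=\o\log\lambda$ for some $\o\in\R$, identifying the index.

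Second, for the representation I would work with $a_p:=\log s_p-\o\log p$, so that the goal becomes to write $a_p=\log C_p+\sum_{j=1}^p \delta_j/j$ with $\delta_j\ri 0$ and $\log C_p$ convergent. Regular variation translates into $a_{\lfloor\lambda p\rfloor}-a_p\ri 0$ uniformly on compacta, i.e.\ $(\exp a_p)=(s_p/p^{\o})$ is slowly varying. The naive choice $\delta_j:=j(a_j-a_{j-1})$ makes $\log C_p$ constant but need not satisfy $\delta_j\ri 0$, since slow variation imposes no control on the microscopic ratios $s_j/s_{j-1}$. Following~\cite{BojanicSeneta}, I would instead smooth $a_p$ by a Cesàro-type average $\tilde a_p:=\frac{1}{p}\sum_{k=1}^p a_k$: a telescoping computation gives $p(\tilde a_p-\tilde a_{p-1})=a_p-\tilde a_{p-1}$, and one shows $a_p-\tilde a_{p-1}\ri 0$ using the uniform slow-variation estimate. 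Setting $\delta_j:=j(\tilde a_j-\tilde a_{j-1})$ and absorbing $a_p-\tilde a_p$ into $\log C_p$ yields the required representation; convergence $C_p\ri C>0$ follows from the fact that $a_p-\tilde a_p$ tends to a finite limit, again via slow variation.

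Third, the converse is essentially a direct computation: from the representation,
\begin{equation*}
\frac{s_{\lfloor\lambda p\rfloor}}{s_p} \;=\; \left(\frac{\lfloor\lambda p\rfloor}{p}\right)^{\o}\frac{C_{\lfloor\lambda p\rfloor}}{C_p}\exp\Bigl(\sum_{j=p+1}^{\lfloor\lambda p\rfloor}\frac{\delta_j}{j}\Bigr),
\end{equation*}
and the first factor tends to $\lambda^{\o}$, the second to $1$ since $C_p\ri C>0$, and the exponential to $1$ because $|\delta_j|<\ep$ for large $j$ while $\sum_{j=p+1}^{\lfloor\lambda p\rfloor}1/j$ stays bounded (in fact tends to $\log\lambda$).

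The main obstacle is clearly the second step: producing a decomposition with $\delta_j\ri 0$. Regular variation controls ratios only along geometrically spaced indices, so some form of smoothing is unavoidable to tame nearest-neighbour behaviour, and one must verify that the smoothing is gentle enough to preserve the asymptotic behaviour while regularising the increments. This is the discrete counterpart of the classical Karamata integral representation for slowly varying functions, and the real technical work lies in transferring that continuous-variable intuition across the discretization errors induced by the floor function.
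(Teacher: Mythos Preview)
The paper does not give a proof of this theorem: it is quoted verbatim from Bojanic--Seneta and used as a black box throughout. There is therefore no ``paper's own proof'' to compare against.

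That said, your reconstruction is broadly on the right track. Step~1 (the functional equation leading to $\psi(\lambda)=\lambda^{\omega}$) and Step~3 (the converse by direct computation) are standard and correct. The substantive work is in Step~2, and your idea of Ces\`aro--smoothing $a_p=\log s_p-\omega\log p$ is sound, but the key claim $a_p-\tilde a_{p-1}\to 0$ needs more than a one--line appeal to ``the uniform slow--variation estimate''. Uniform slow variation only gives $|a_{\lfloor\lambda p\rfloor}-a_p|<\varepsilon$ for $\lambda$ in a fixed compact interval and $p$ large; to control $\frac{1}{p-1}\sum_{k=1}^{p-1}(a_p-a_k)$ you must iterate this along dyadic scales $k\in[p/2^{m},p/2^{m-1}]$, obtaining $|a_p-a_k|\lesssim m\varepsilon$ there and then summing the geometric weights $2^{-m}$, while bounding the finitely many small $k$ separately using the growth estimate $|a_p|=O(\varepsilon\log p)$ that the iteration also yields. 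This works, but it is the real content of the step and should be spelled out.

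Two minor remarks. First, your construction actually forces $C_p\to 1$, since $a_p-\tilde a_p=(a_p-\tilde a_{p-1})-\delta_p/p\to 0$; the general $C>0$ in the statement merely reflects non--uniqueness of the representation. Second, the statement as quoted in the paper asks for \emph{positive} $\delta_p$, which is not what your construction produces and is in fact too strong (for instance $s_p=p/\log p$ cannot be so represented, since $\sum\delta_j/j$ would have to diverge to $-\infty$). This is a slip in the paper's transcription of the Bojanic--Seneta result, not a defect in your argument: the $\delta_p$ should be real.
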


For convenience, given a sequence $\M$ of positive real numbers we define
\begin{equation}\label{equaDefiAlfaBeta}
\a_p:=\log(m_p),\quad p\in\N_0; \qquad \b_0:=\a_0, \qquad \b_p:=\log\left(\frac{m_p}{M^{1/p}_p}\right), \quad p\geq1.
\end{equation}

The following proposition, interesting in its own right, is a new characterization of regular variation for sequences in terms of the existence of a limit closely related
with the construction of proximate orders, as it will be shown in Theorem~\ref{theorem.charact.prox.order.nonzero}.

\begin{prop}\label{prop.beta.regularvariation.bis}
 Let $\M=(M_p)_{p\in\N_0}$ be a sequence of positive real numbers.
 The following are equivalent:
\begin{enumerate}[(i)]
 \item There exists $\lim_{p\to\infty} \log\big(m_p/M_p^{1/p}\big)\in\R$.
 \item $\m$ is regularly varying.
\end{enumerate}
In case any of these statements holds, the value of the limit in (i) and the index of regular variation of $\m$ are the same.
\end{prop}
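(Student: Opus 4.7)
The plan is to use the Bojanic--Seneta representation (Theorem~\ref{theo.BojanicSeneta}) as a bridge between the two statements, after translating everything to the additive side by setting $\alpha_p:=\log m_p$. Since $M_p=m_0m_1\cdots m_{p-1}$, the definition~\eqref{equaDefiAlfaBeta} of $\beta_p$ becomes
\[
\beta_p=\alpha_p-A_p,\qquad A_p:=\frac{1}{p}\sum_{k=0}^{p-1}\alpha_k,\quad p\in\N,
\]
so that condition (i) reads: ``$\alpha_p$ and its Ces\`aro mean differ by a convergent amount''. On the other hand, Theorem~\ref{theo.BojanicSeneta} characterizes regular variation of $\bm$ with index $\omega$ by the existence of a representation
\[
\alpha_p=\omega\log p+c_p+\sum_{j=1}^{p}\frac{\delta_j}{j}
\]
with $c_p\to c\in\R$ and $\delta_j\to 0$. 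My task is to connect directly these two shapes.

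For the implication (ii)$\Rightarrow$(i) I would substitute the representation into the definition of $A_p$ and evaluate it summand by summand. Stirling's formula yields $\frac{\omega}{p}\sum_{k=1}^{p-1}\log k=\omega\log p-\omega+o(1)$; the classical Ces\`aro result gives $\frac{1}{p}\sum_{k=0}^{p-1}c_k\to c$; and a swap of summation order produces
\[
\frac{1}{p}\sum_{k=0}^{p-1}\sum_{j=1}^{k}\frac{\delta_j}{j}=\sum_{j=1}^{p-1}\frac{\delta_j}{j}-\frac{1}{p}\sum_{j=1}^{p-1}\delta_j,
\]
which differs from $\sum_{j=1}^{p}\delta_j/j$ by $\delta_p/p+\frac{1}{p}\sum_{j=1}^{p-1}\delta_j=o(1)$, because $\delta_j\to 0$. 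Collecting the three contributions, the $\omega\log p$ terms and the harmonic tails cancel and one is left with $\beta_p=\alpha_p-A_p\to\omega$, showing (i) with limit equal to the index $\omega$.

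For (i)$\Rightarrow$(ii), starting from $\beta_p\to\omega$, I would use the elementary identity $(p+1)A_{p+1}-pA_p=\alpha_p=A_p+\beta_p$, equivalent to $A_{p+1}-A_p=\beta_p/(p+1)$, which telescopes into
\[
\alpha_p=\alpha_0+\beta_p+\sum_{k=1}^{p-1}\frac{\beta_k}{k+1}.
\]
Writing $\beta_k=\omega+\epsilon_k$ with $\epsilon_k\to 0$ and using that $H_p-\log p$ converges, the main sum contributes $\omega\log p$ plus a convergent quantity; reindexing $\sum_{k=1}^{p-1}\epsilon_k/(k+1)$ as $\sum_{j=2}^{p}\epsilon_{j-1}/j$ (with $\delta_1:=0$ and $\delta_j:=\epsilon_{j-1}$ for $j\geq 2$) produces exactly the shape demanded by Theorem~\ref{theo.BojanicSeneta}. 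Its converse part then yields the regular variation of $\bm$ with index $\omega$, which matches the limit in (i).

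The main obstacle is the symmetric treatment of the harmonic-weighted sum $\sum_{j=1}^{p}\delta_j/j$ under Ces\`aro averaging: it is not a priori clear that averaging such a sum only introduces an $o(1)$ error, but the elementary double-sum swap above turns this into a transparent estimate. Once this step is in place, the matching of constants (the value of the limit in (i) equaling the index of regular variation) falls out automatically from the normalization of the two sides.
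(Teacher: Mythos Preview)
Your argument is correct. For (ii)$\Rightarrow$(i) you do essentially what the paper does: plug the Bojanic--Seneta representation into $M_p=\prod_{k<p}m_k$ and let Stirling plus Ces\`aro averaging produce the limit $\omega$; the paper carries this out multiplicatively, you do it additively, but the computations line up term by term.

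For (i)$\Rightarrow$(ii) your route differs from the paper's. Both start from the telescoping identity $\alpha_p=\alpha_0+\beta_p+\sum_{k=1}^{p-1}\beta_k/(k+1)$ (this is the paper's~\eqref{eqalfabeta}). From there the paper verifies the \emph{definition} of regular variation directly: it estimates $\alpha_{\lfloor\lambda p\rfloor}-\alpha_p$ via $\sum_{k=p}^{\lfloor\lambda p\rfloor-1}\beta_k/(k+1)$ and an $\varepsilon$--$\delta$ argument with the harmonic asymptotics, treating $\lambda>1$ and $\lambda<1$ separately. You instead rewrite $\alpha_p$ in the canonical form $\omega\log p+c_p+\sum_{j\le p}\delta_j/j$ and invoke the \emph{converse} half of Theorem~\ref{theo.BojanicSeneta}. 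Your path is shorter and avoids the case split; the paper's path is more self-contained in that it does not lean on the converse of the representation theorem. One small caveat: Theorem~\ref{theo.BojanicSeneta} as quoted here calls $(\delta_p)$ a sequence of \emph{positive} numbers, whereas your $\delta_j=\beta_{j-1}-\omega$ need not be positive. This is harmless (the converse of Bojanic--Seneta only needs $\delta_j\to 0$, and in any case the limit $\lim m_{\lfloor\lambda p\rfloor}/m_p=\lambda^\omega$ is a two-line check from your representation), but you should flag it.
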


\begin{proof} (i) $\Rightarrow$ (ii)  We call $\o$ the value of the limit in (i). If we consider the sequences
$(\a_p)_{p\in\N_0}$ and $(\b_p)_{p\in\N_0}$
defined in~\eqref{equaDefiAlfaBeta}, we will show that
$$\lim_{p\ri\oo}( \a_{\lfloor\lambda p\rfloor}-\a_p)=\o\log(\lambda),\qquad \lambda>0 ,$$
 which, by definition, implies condition (ii) and, moreover, by Theorem~\ref{theo.BojanicSeneta}, shows that the index of regular variation is equal to $\o$. For $\lambda=1$ the result is immediate. Assuming that $\lambda>1$, and using Lemma 3.8 in \cite{JimenezSanz} we know that
\begin{align}
  \a_p=&\sum^{p-1}_{k=0}\frac{\b_k}{k+1}+\b_p, \qquad p\in\N_0.\label{eqalfabeta}
 \end{align}
This leads to
\begin{equation}
 \a_{\lfloor\lambda p\rfloor}-\a_p=\sum^{\lfloor\lambda p\rfloor-1}_{k=p}\frac{\b_k}{k+1}+\b_{\lfloor\lambda p\rfloor}-\b_p.\no
\end{equation}
Condition~(i) can be written as
$\lim_{p\ri\oo}\b_p=\o$,
so it is sufficient to prove that
$$\lim_{p\ri\oo} \sum^{\lfloor\lambda p\rfloor-1}_{k=p}\frac{\b_k}{k+1} =\o\log(\lambda).$$
If we take $\ep>0$, we fix $\delta>0$ such that $\delta\log(\lambda)<\ep/6$.
There exists $p_\delta\in\N$ such that
$|\b_p-\o|<\delta$ for $p\geq p_\delta$.
We remember that the $p-$th partial sum $H_p=\sum_{k=1}^p 1/k$ of the harmonic series may be given as
\begin{equation}\label{equaPartialSumsHarmonic}
H_p=\log(p)+\ga+\ep_p,\qquad \ga=\textrm{Euler's constant},\ \lim_{p\ri\oo} \ep_p=0.
\end{equation}
Consequently, for $p\geq p_\delta$ we have
$$\sum^{\lfloor\lambda p\rfloor-1}_{k=p}\frac{\b_k}{k+1} \leq (\o+\delta) (H_{\lfloor\lambda p\rfloor}-H_p) =
(\o+\delta) \left(\log\left(\frac{\lfloor\lambda p\rfloor}{\lambda p}\right)+\log(\lambda) +\ep_{\lfloor\lambda p\rfloor}-\ep_p\right).$$
Using that $\lim_{p\ri\oo}\lfloor\lambda p\rfloor /(\lambda p) =1$ and that $\lim_{p\ri\oo} \ep_p=0$, we take $p_0\geq \max(p_\delta,(\lambda-1)^{-1})$ such
that for every $p\geq p_0$ one has
$$\left|\o\log\left(\frac{\lfloor\lambda p\rfloor}{\lambda p}\right) \right|<\ep/12, \quad
\left|\de\log\left(\frac{\lfloor\lambda p\rfloor}{\lambda p}\right) \right|<\ep/12,\quad |\o\ep_p|<\ep/6,
\quad |\de\ep_p|<\ep/6.$$
Then for $p\geq p_0$ we see that $\lfloor\lambda p\rfloor\ge p$, and so
$$\sum^{\lfloor\lambda p\rfloor-1}_{k=p}\frac{\b_k}{k+1} <\o\log(\lambda)+\ep.$$
Analogously, for $p\geq p_0$ we may also get that
$$ \o\log(\lambda)-\ep<\sum^{\lfloor\lambda p\rfloor-1}_{k=p}\frac{\b_k}{k+1},$$
and we are done.\par
For $\lambda\in(0,1)$, the proof is similar and we omit it.

(ii) $\Rightarrow$ (i) Let $\o$ be the index of regular variation of $\m$. By Theorem~\ref{theo.BojanicSeneta} one may write, after easy adaptations,
$$m_p=(p+1)^{\o} C_p \exp  \left(\sum^{p}_{k=1}\frac{\delta_k}{k}\right),\quad p\in\N_0, $$
where $(C_p)_{p\in\N_0}$ and $(\delta_p)_{p\in\N}$ are sequences of positive numbers converging to $C\in(0,\oo)$ and zero,
respectively. Then
\begin{align*}
 M_p &= m_0m_1\cdots m_{p-1}=p!^{\o} \left(\prod_{k=0}^{p-1} C_k \right)\exp  \left(\sum^{p-1}_{j=1} \sum^{j}_{k=1}\frac{\delta_k}{k}\right)\\
 & =
p!^{\o} \left(\prod_{k=0}^{p-1} C_k\right) \exp  \left(\sum^{p-1}_{j=1} (p-j)\frac{\delta_j}{j}\right) \\
&= p!^{\o} \left(\prod_{k=0}^{p-1} C_k\right) \exp  \left(p\sum^{p-1}_{j=1} \frac{\delta_j}{j} - \sum^{p-1}_{j=1} \delta_j \right).
\end{align*}
Using that $p!^{\o/p}\sim p^{\o} e^{-\o} (\sqrt{2\pi p})^{\o/p}$ and that
$\lim_{p\ri\oo} \left(\prod_{k=0}^{p-1} C_k\right)^{1/p}=C $,
we see that
$$\lim_{p\ri\oo} \frac{m_p}{M^{1/p}_p} =\lim_{p\ri\oo} e^{\o} (\sqrt{2\pi p})^{-\o/p} \exp  \left( \frac{1}{p} \sum^{p}_{j=1}\delta_j \right)=e^{\o}$$
or, equivalently, $\lim_{p\ri\oo} \b_p=\o$.
\end{proof}

We will also need the following theorem of L.~de~Haan~\cite{haan}, that shows that if we have monotonicity,
we only need to prove~(\ref{equation.def.SRV}) for two suitable integer values of $\lambda$. Then, if the sequence $\M$ is (lc),
we can give a nicer expression for the regular variation of $\m$.

\begin{theo}[\cite{haan},\ Th.\ 1.1.2] \label{theo.de.haan}
A positive monotone sequence $(s_p)_{p\in\N_0}$ varies regularly if there exist
positive integers $\ell_1,\ell_2$ with $\log(\ell_1)/\log(\ell_2)$ finite and irrational such that for some real number $\o$,
$$\lim_{p\ri\oo} \frac{s_{\ell_j p}}{s_p}=\ell_j^{\o}, \qquad j=1,2.$$
\end{theo}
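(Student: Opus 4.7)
\emph{Setup.} Without loss of generality I would assume $(s_p)$ is nondecreasing (otherwise I replace $s_p$ by $1/s_p$ and $\o$ by $-\o$); monotonicity then forces $\o\ge 0$. My plan is to first establish the regular-variation limit on the multiplicative semigroup generated by $\ell_1,\ell_2$, which is dense in $[1,\oo)$ thanks to the irrationality hypothesis, and then to transfer the limit to every $\lambda>0$ via a sandwich argument that exploits monotonicity.

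\emph{Step 1: iteration on the semigroup.} Since $\ell_1^m\ell_2^n p\in\N$ for every $m,n\in\N_0$ and every $p\in\N$, I would iterate the hypothesis along a telescoping chain
$$\frac{s_{\ell_1^m\ell_2^n p}}{s_p}=\prod_{j=0}^{m-1}\frac{s_{\ell_1^{j+1}\ell_2^n p}}{s_{\ell_1^{j}\ell_2^n p}}\cdot\prod_{j=0}^{n-1}\frac{s_{\ell_2^{j+1} p}}{s_{\ell_2^{j} p}},$$
each of whose factors tends, as $p\to\oo$, to either $\ell_1^\o$ or $\ell_2^\o$ by applying the hypothesis at the relevant index (which itself diverges). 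This yields $\lim_{p\to\oo}s_{\ell_1^m\ell_2^n p}/s_p=(\ell_1^m\ell_2^n)^\o$ for all $(m,n)\in\N_0^2$.

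\emph{Step 2: density and sandwich.} Because $\log\ell_1/\log\ell_2$ is irrational, Kronecker's theorem tells me that $\{\ell_1^m\ell_2^n:m,n\in\N_0\}$ is dense in $[1,\oo)$. For $\lambda>1$ and $\ep>0$ I would pick $(m_i,n_i)$, $i=1,2$, so that $\lambda(1-\ep)<\ell_1^{m_1}\ell_2^{n_1}\le\lambda\le\ell_1^{m_2}\ell_2^{n_2}<\lambda(1+\ep)$. Since all approximants are integers, the chain $\ell_1^{m_1}\ell_2^{n_1}p\le\lfloor\lambda p\rfloor\le\ell_1^{m_2}\ell_2^{n_2}p$ holds automatically, and monotonicity gives
$$\frac{s_{\ell_1^{m_1}\ell_2^{n_1}p}}{s_p}\le\frac{s_{\lfloor\lambda p\rfloor}}{s_p}\le\frac{s_{\ell_1^{m_2}\ell_2^{n_2}p}}{s_p}.$$
Step~1 applied to both outer terms, followed by $\ep\to 0^+$, pinches the limit to $\lambda^\o$. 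For $\lambda\in(0,1)$, I would set $q=\lfloor\lambda p\rfloor$ and $\mu=1/\lambda>1$; since $\mu q\le p<\mu q+\mu$, choosing $\ell_1^{m_1}\ell_2^{n_1}<\mu<\ell_1^{m_2}\ell_2^{n_2}$ arbitrarily close to $\mu$ yields $\ell_1^{m_1}\ell_2^{n_1}q\le p\le\ell_1^{m_2}\ell_2^{n_2}q$ for $q$ large, and a symmetric sandwich on $s_q/s_p$ pinches the limit to $\mu^{-\o}=\lambda^\o$. The case $\lambda=1$ is trivial.

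\emph{Main obstacle.} The hard part, and the reason the theorem is nontrivial, is that \emph{a priori} nothing controls $s_{p+c}/s_p$ for fixed $c$: such ratios could oscillate badly despite monotonicity and the two scale limits. The saving grace is that the approximants $\ell_1^m\ell_2^n p$ are exact integer multiples of $p$, so no floor need be applied to them and Step~1 gives their asymptotics directly; the only floor appearing in the argument, namely $\lfloor\lambda p\rfloor$, is sandwiched between two such clean integer scales. Thus the uncontrolled unit-step behaviour never enters the estimate, and the argument closes as soon as the irrationality of $\log\ell_1/\log\ell_2$ provides the required density.
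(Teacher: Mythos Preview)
The paper does not supply a proof of this theorem; it is quoted as an external result from de~Haan's monograph. So there is no in-paper argument to compare against, and your attempt must be judged on its own.

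Step~2 contains a genuine error that collapses the whole strategy. You invoke Kronecker to claim that $\{\ell_1^m\ell_2^n:m,n\in\N_0\}$ is dense in $[1,\infty)$, but $\ell_1,\ell_2$ are positive \emph{integers}, so every element of this set is itself a positive integer and the set is nowhere dense in $[1,\infty)$. What Kronecker actually yields (via the irrationality of $\log\ell_1/\log\ell_2$) is the density of the \emph{ratios} $\{\ell_1^m/\ell_2^n:m,n\in\N_0\}$ in $(0,\infty)$. Once you pass to ratios, however, your approximants $\ell_1^{m_i}\ell_2^{n_i}$ cease to be integers, the chain $\ell_1^{m_1}\ell_2^{n_1}p\le\lfloor\lambda p\rfloor\le\ell_1^{m_2}\ell_2^{n_2}p$ is no longer available, and the ``main obstacle'' you congratulate yourself on avoiding---the lack of control over $s_{p+c}/s_p$---reappears in full force.

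The standard repair requires exactly the ingredient you tried to sidestep: one first proves $s_{p+1}/s_p\to 1$. Pick $m,n\in\N_0$ with $\mu:=\ell_1^m/\ell_2^n>1$ arbitrarily close to~$1$; for $p>1/(\mu-1)$ the intervals $[\ell_2^np,\ell_1^mp]$ overlap as $p$ increases and hence cover all large integers, while monotonicity gives $s_{k+1}/s_k\le s_{\ell_1^mp}/s_{\ell_2^np}\to\mu^{\o}$ for every $k$ in such an interval; letting $\mu\downarrow 1$ yields $\limsup_k s_{k+1}/s_k\le 1$. With $s_{p+c}/s_p\to 1$ for each fixed $c$ in hand, one then approximates $\lambda$ from both sides by ratios $\ell_1^{m_i}/\ell_2^{n_i}$, writes $p=\ell_2^{n_i}q_i+r_i$ with $0\le r_i<\ell_2^{n_i}$, and sandwiches $s_{\lfloor\lambda p\rfloor}/s_p$ between expressions of the form $(s_{\ell_1^{m_i}q_i}/s_{\ell_2^{n_i}q_i})\cdot(s_{\ell_2^{n_i}q_i}/s_p)$, where the first factor is controlled by your Step~1 and the second tends to~$1$.
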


\subsection{Characterizations for $d_{\M}$ being a nonzero proximate order}

If $\M=(M_p)_{p\in\N_0}$ is (lc) with $\lim_{p\to\infty} m_p=\infty$,
S. Mandelbrojt considers in \cite{mandelbrojt} its \textit{associated function} $M(t)$ given in~\eqref{eqDefinMdet},

which may be computed as
 \begin{equation*}\label{equaexprMdet}
M(t)=\left \{ \begin{matrix}  p\log t -\log(M_{p}) & \mbox{if }t\in [m_{p-1},m_{p}),\ p=1,2,\ldots,\\
\\
0 & \mbox{if } t\in [0,m_{0}). \end{matrix}\right.
\end{equation*}
For later use, we note that
\begin{equation}
 M(m_p)=\log \left( \frac{m^p_p}{M_p}\right), \qquad p\in \N_0. \label{equation.M.in.mp}
\end{equation}
We can also consider the \textit{counting function} $\nu:(0,\infty)\to\N_0$ for the sequence of quotients $\bm$, given by
\begin{equation*}
\nu(t)=\#\{j:m_j\le t\},
\end{equation*}
which allows one to write
\begin{equation}\label{equaRelationM_nu}
M(t)=\nu(t)\log(t)-\log(M_{\nu(t)}),\quad t>0;\ M'(t)=\frac{\nu(t)}{t},\quad t>0,\ t\neq m_p,\ p\in\N_0.
\end{equation}

The link between proximate orders and sequences is given by the function
\begin{equation}\label{equaDefidM}
d_{\M}(t)=\frac{\log(M(t))}{\log (t)},\quad t\textrm{ large enough}.
\end{equation}
Based on a theorem of S. Mandelbrojt, we have the following theorem relating the function $d_{\M}$, the sequence $(m_p)_{p\in\N_0}$ and the index $\o(\M)$ given by
\begin{equation*}
\omega(\M)=\liminf_{p\to\infty}
\frac{\log(m_{p})}{\log(p)},
\end{equation*}
which for a (lc) sequence $\M=(M_p)_{p\in\N_0}$ with $\lim_{p\to\infty} m_p=\infty$ can be 0, a positive real number or $\infty$. This index plays a prominent role in the study of quasianalyticity in Carleman ultraholomorphic classes, see~\cite{JimenezSanz} and~\cite{SanzAsymptoticAnalysis}.

\begin{theo}[\cite{JimenezSanz}, Th.\ 3.2;\ \cite{SanzAsymptoticAnalysis}, Th.\ 2.24 and Th.\ 4.6 ]\label{theo.limsup.dM.omega}
Let $\M$  be (lc) with $\lim_{p\ri\oo}m_p=\oo$, then
$$\limsup_{t\to\infty} d_{\M}(t)
=\limsup_{p\to\infty}\frac{\log(p)}{\log(m_{p})}=\frac{1}{\o(\M)}$$
(where the last quotient is understood as 0 if $\o(\M)=\infty$, and as $\infty$ if $\o(\M)=0$).
\end{theo}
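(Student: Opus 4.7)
The middle equality $\limsup_p \log(p)/\log(m_p)=1/\o(\M)$ is the elementary identity $\limsup(1/a_p)=1/\liminf(a_p)$ applied to $a_p:=\log(m_p)/\log(p)$, which is positive for all $p$ large since $m_p\to\infty$. The content is the first equality $\limsup_{t\to\infty} d_\M(t)=1/\o(\M)$, which I would establish by proving $\le$ and $\ge$ separately. Both inequalities rest on the representation $M(t)=\nu(t)\log t-\log M_{\nu(t)}$ from~\eqref{equaRelationM_nu} and on the (lc) consequence $\log M_p\le p\log m_{p-1}\le p\log m_p$ coming from Proposition~\ref{propPropiedlcmg}(ii.1). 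For readability I first assume $\o(\M)\in(0,\infty)$; the extremal cases $\o(\M)\in\{0,\infty\}$ follow from the same estimates with obvious modifications (in the first case $A>0$ arbitrary replaces $\o(\M)-\ep$, forcing $\limsup=0$; in the second the upper bound trivialises to $+\infty$).

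For the $\le$ direction, fix $\ep>0$; by the liminf definition of $\o(\M)$, $m_p\ge p^{\o(\M)-\ep}$ for all $p$ large, which yields the polynomial majorant $\nu(t)\le t^{1/(\o(\M)-\ep)}+C_\ep$ for $t$ large. Since $M_p\to\infty$, one has $\log M_{\nu(t)}\ge 0$ eventually, so $M(t)\le \nu(t)\log t$ and
\begin{equation*}
d_\M(t)\le\frac{\log\nu(t)}{\log t}+\frac{\log\log t}{\log t}\le\frac{1}{\o(\M)-\ep}+o(1);
\end{equation*}
letting $\ep\to 0^+$ closes this half.

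For the $\ge$ direction, given $\ep>0$ I extract, via the liminf definition, an infinite subsequence $(p_k)$ with $m_{p_k}\le p_k^{\o(\M)+\ep}$. The main trap is the naive choice $t=m_{p_k}$: since $M(m_{p_k})=p_k\log m_{p_k}-\log M_{p_k}$ can collapse when $\bm$ has long plateaus ending at $p_k$, the direct estimate yields nothing useful. The remedy is a mild overshoot: for a fixed $\lambda>1$, set $t_k:=m_{p_k}^\lambda$, so $t_k\to\infty$. Evaluating the supremum defining $M$ at the single index $p_k$ and using $\log M_{p_k}\le p_k\log m_{p_k}$ gives
\begin{equation*}
M(t_k)\ge p_k\log t_k-\log M_{p_k}=\lambda p_k\log m_{p_k}-\log M_{p_k}\ge(\lambda-1)p_k\log m_{p_k}.
\end{equation*}
Dividing by $\log t_k=\lambda\log m_{p_k}$ and using $\log m_{p_k}\le(\o(\M)+\ep)\log p_k$ forces $\liminf_k d_\M(t_k)\ge 1/[\lambda(\o(\M)+\ep)]$, and hence $\limsup_t d_\M(t)\ge 1/[\lambda(\o(\M)+\ep)]$. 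Letting first $\lambda\to 1^+$ and then $\ep\to 0^+$ yields $\limsup_t d_\M(t)\ge 1/\o(\M)$, which together with the previous half completes the proof. The only genuinely delicate step is this lower bound, and the $\lambda$-overshoot is precisely what converts a possibly vanishing "fresh" contribution of $\bm$ at the corner $p_k$ into the clean growth margin $(\lambda-1)p_k\log m_{p_k}$ for $M(t_k)$.
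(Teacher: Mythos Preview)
The paper does not supply its own proof of this theorem; it is quoted from~\cite{JimenezSanz} and~\cite{SanzAsymptoticAnalysis}. So there is nothing to compare against, and the relevant question is whether your argument stands on its own. It does: both directions are correct, and the $\lambda$-overshoot device in the lower bound is exactly the right way to avoid the collapse of $M(m_{p_k})$ on plateaus of~$\bm$. The chain
\[
d_\M(t_k)\ge\frac{\log\big((\lambda-1)p_k\log m_{p_k}\big)}{\lambda\log m_{p_k}}
=\frac{\log p_k}{\lambda\log m_{p_k}}+o(1)\ge\frac{1}{\lambda(\o(\M)+\ep)}+o(1)
\]
is valid since $m_{p_k}\to\infty$, and the upper bound via $M(t)\le\nu(t)\log t$ and the polynomial control on $\nu$ is clean.

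One cosmetic slip: in your parenthetical remark on the extremal cases you have swapped the descriptions. When $\o(\M)=\infty$ one replaces $\o(\M)-\ep$ by an arbitrary $A>0$ in the \emph{upper} bound and lets $A\to\infty$ to force $\limsup d_\M=0$ (the lower bound $\ge 0$ being automatic since $M(t)\to\infty$). When $\o(\M)=0$ the upper bound gives no information, while in the lower bound one takes $\ep>0$ arbitrary, obtains $\limsup d_\M\ge 1/(\lambda\ep)$, and lets $\ep\to0^+$ to get $+\infty$. Your text attaches these to ``first'' and ``second'' in the wrong order relative to the set $\{0,\infty\}$ as written; the mathematics is unaffected.
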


The main result of this section characterizes those sequences $\M$ for which $d_{\M}$ is a nonzero proximate order.

\begin{theo}\label{theorem.charact.prox.order.nonzero}
Let $\M=(M_p)_{p\in\N_0}$ be a (lc) sequence with $\lim_{p\to\infty} m_p=\infty$. The following are equivalent:
\begin{enumerate}[(a)]
 \item $d_{\M}(t)$ is a proximate order with $\lim_{t\to\infty}d_{\M}(t)\in(0,\infty)$.
 \item There exists $\lim_{p\to\infty} \log\big(m_p/M_p^{1/p}\big)\in(0,\infty)$.
 \item $\m$ is regularly varying with a positive index of regular variation.
 \item There exists $\o>0$ such that for every natural number $\ell\geq 2$,
 $$\lim_{p\ri\oo} \frac{m_{\ell p}}{m_p}=\ell^\o.$$
\end{enumerate}
In case any of these statements holds, the value of the limit mentioned in (b), that of the index mentioned in (c), and that of the constant $\o$ in (d) is $\o(\M)$,
and the limit in (a) is $1/\o(\M)$.
\end{theo}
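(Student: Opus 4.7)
My approach is to arrange the equivalences as a cycle around condition (c), using Proposition~\ref{prop.beta.regularvariation.bis} for $(b)\Leftrightarrow(c)$, de Haan's Theorem~\ref{theo.de.haan} for $(c)\Leftrightarrow(d)$, and direct arguments built around the associated function $M(t)$ and the counting function $\nu(t)$ for $(a)\Leftrightarrow(c)$. All identifications of constants will be forced by Theorem~\ref{theo.limsup.dM.omega}, which pins $\limsup_{t\to\infty} d_{\M}(t)=1/\o(\M)$.

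First, $(b)\Leftrightarrow(c)$ is immediate from Proposition~\ref{prop.beta.regularvariation.bis}: the limit in its statement~(i) equals the index of regular variation in~(ii), and positivity transfers between them. For $(c)\Leftrightarrow(d)$, Proposition~\ref{propPropiedlcmg}(i) ensures that $\bm$ is nondecreasing because $\M$ is (lc); then $(c)\Rightarrow(d)$ is trivial (take $\lambda=\ell\in\N$ in the definition), while $(d)\Rightarrow(c)$ follows from Theorem~\ref{theo.de.haan} applied to $\ell_1=2$ and $\ell_2=3$, whose logarithmic ratio is irrational.

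For $(c)\Rightarrow(a)$, assume $\bm$ is regularly varying of index $\o>0$. Theorem~\ref{theo.BojanicSeneta} yields $m_p=p^{\o}L(p)$ with $L$ slowly varying, so $\log m_p/\log p\to\o$ and thus $\o(\M)=\o$. The critical step is the asymptotic
\begin{equation*}
p\log m_p-\log M_p=\o p+o(p)\qquad(p\to\infty),
\end{equation*}
from which, via~\eqref{equation.M.in.mp} and the estimate $\log t-\log m_{\nu(t)}\to 0$ (coming from $m_p/m_{p-1}\to 1$), one obtains $M(t)\sim \o\nu(t)$. Since $\nu$ inverts $\bm$, it is regularly varying of index $1/\o$, and therefore so is $M$, giving $d_{\M}(t)\to 1/\o$. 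Conditions~(A), (B), (C) of Definition~\ref{OAD:1} are then immediate; for~(D), at every differentiability point one has
\begin{equation*}
t\log t\cdot d_{\M}'(t)=\frac{\nu(t)}{M(t)}-d_{\M}(t),
\end{equation*}
and both terms tend to $1/\o$. Conversely, $(a)\Rightarrow(c)$: Theorem~\ref{theo.limsup.dM.omega} forces $\lim d_{\M}=1/\o(\M)$, and Proposition~\ref{propRegVarOrdAprox} makes $M(t)=t^{d_{\M}(t)}$ regularly varying of index $1/\o(\M)$. The same identity, rewritten as
\begin{equation*}
\frac{\nu(t)}{M(t)}=d_{\M}(t)+t\log t\cdot d_{\M}'(t),
\end{equation*}
together with (D) and $d_{\M}\to 1/\o(\M)$, gives $\nu(t)\sim M(t)/\o(\M)$, so $\nu$ is regularly varying of positive index $1/\o(\M)$; the classical inverse theorem for regularly varying functions then yields that $\bm$, as the asymptotic inverse of $\nu$, is regularly varying of index $\o(\M)$.

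The main obstacle is the asymptotic $p\log m_p-\log M_p=\o p+o(p)$ driving $(c)\Rightarrow(a)$. Writing $L(p)=C_p\exp\bigl(\sum_{k=1}^p\delta_k/k\bigr)$ from the Bojanic-Seneta representation, with $C_p\to C\in(0,\infty)$ and $\delta_p\to 0$, the problem reduces to showing $\log L(p)-\frac{1}{p}\sum_{k<p}\log L(k)\to 0$. This is not automatic from slow variation alone: it requires separate Stolz-Cesaro arguments applied to $(\log C_k)$ (which converges) and to $\bigl(\sum_{j\le k}\delta_j/j\bigr)$ (which may diverge, but only at a subharmonic rate since $\delta_j\to 0$). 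A secondary technical point in $(a)\Rightarrow(c)$ is the careful translation between regular variation of the integer-valued step function $\nu$ and of the monotone sequence $\bm$ as its generalized inverse, which should be cross-referenced to the classical inversion theorem for regularly varying functions of positive index.
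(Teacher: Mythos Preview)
Your proof is correct, but the route and the organization differ from the paper's in ways worth noting.

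The paper closes the cycle through (b) rather than (c): it proves $(a)\Rightarrow(b)$ and $(b)\Rightarrow(a)$ directly, then $(b)\Leftrightarrow(c)$ by Proposition~\ref{prop.beta.regularvariation.bis} and $(c)\Leftrightarrow(d)$ by de Haan, exactly as you do. Your $(a)\Rightarrow(c)$ argument coincides with the paper's $(a)\Rightarrow(b)$ up to the point where $\nu(t)/M(t)\to 1/\o(\M)$; at that stage the paper simply specializes to $t=m_p$ and uses $\nu(m_p)=p+1$, $M(m_p)=p\b_p$ (equation~\eqref{equation.M.in.mp}) to read off $\b_p\to\o(\M)$, which is (b). This avoids entirely the classical inversion theorem for regularly varying functions that you invoke to pass from regular variation of $\nu$ back to that of $\bm$. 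Both approaches are valid; the paper's is more self-contained.

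More importantly, the ``main obstacle'' you flag in $(c)\Rightarrow(a)$ is illusory. The asymptotic $p\log m_p-\log M_p=\o p+o(p)$ is, by~\eqref{equation.M.in.mp}, nothing other than $\b_p\to\o$, i.e.\ condition~(b). Since you have already secured $(c)\Rightarrow(b)$ via Proposition~\ref{prop.beta.regularvariation.bis}, the asymptotic is available for free, and the separate Stolz--Ces\`aro treatment of $(\log C_k)$ and $\bigl(\sum_{j\le k}\delta_j/j\bigr)$ is redundant---it merely reproves the $(ii)\Rightarrow(i)$ direction of that proposition. Once you recognize this, your $(c)\Rightarrow(a)$ is really $(c)\Rightarrow(b)\Rightarrow(a)$, and the remaining steps ($M(t)\sim\o\,\nu(t)$, the derivative identity for $d_{\M}'$, convergence of both terms to $1/\o$) match the paper's $(b)\Rightarrow(a)$ argument. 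The paper also takes care to sandwich $\nu(t)/M(t)$ and $d_{\M}(t)$ between their values at $m_{p-1}$ and $m_p$, which is the clean way to pass from the discrete limit $\b_p\to\o$ to the continuous ones; your use of $m_p/m_{p-1}\to 1$ achieves the same end.
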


\begin{proof}
For the sake of completeness, we will show more implications than strictly needed.\par
 (a) $\Rightarrow$ (b) According to \eqref{equaDefidM} and \eqref{equaRelationM_nu}

 we have that
$$d_{\M}'(t)=\frac{M'(t)}{\log(t)M(t)}-\frac{d_{\M}(t)}{t \log(t)}=
\frac{1}{t\log(t)}\left(\frac{\nu(t)}{M(t)}-d_{\M}(t)\right),$$
whenever it exists.
Observe that (\ref{OA4:1}) in Definition~\ref{OAD:1} amounts then to
\begin{equation}\label{equaRephraseConditionDProxOrder}
\lim_{t\ri \oo} \Big(\frac{\nu(t)}{M(t)} - d_{\M}(t)\Big)=0.
\end{equation}
By Theorem~\ref{theo.limsup.dM.omega} and condition (C) in Definition~\ref{OAD:1}, we know that $\lim_{t\ri\oo} d_{\M}(t) = 1/\o(\M)$, and so
\begin{equation*}
\lim_{t\ri \oo} \frac{\nu(t)}{M(t)}= \frac{1}{\o(\M)}.
\end{equation*}
In particular,
$$
\lim_{p\ri \oo} \frac{\nu(m_p)}{M(m_p)}=\lim_{p\ri \oo} \frac{p+1}{M(m_p)}=\frac{1}{\o(\M)}.
$$
Taking into account~(\ref{equation.M.in.mp}), the last limit may be written as
 \begin{equation*}
  \lim_{p\ri\oo} \log\left(\frac{m_p}{M_p^{1/p}}\right)=\o(\M),
 \end{equation*}
as desired.

(b) $\Rightarrow$ (a) According to~\eqref{equaDefiAlfaBeta}, condition~(b) can be written as
\begin{equation}\label{equaLimitBetaOmega}
\lim_{p\ri\oo} \b_p=\o\in(0,\oo).
\end{equation}
By using~(\ref{eqalfabeta}), we see that
\begin{equation*}
\lim_{p\ri\oo} \frac{(\a_{p+1}-\b_{p+1})-(\a_p-\b_p)}{\log(p+1)-\log(p)}=\lim_{p\ri\oo} \frac{\b_p/(p+1)}{1/p}=\o,
\end{equation*}
and then we deduce by Stolz's criterion that
\begin{equation*}
\lim_{p\ri\oo} \frac{\a_p-\b_p}{\log(p)}=\o.
\end{equation*}
Since $\beta_p=O(1)$ (and $\a_p=\log(m_p)$), we get
\begin{equation}\label{equaAuxilLimit1}
\lim_{p \ri \oo} \frac{\log(m_p)}{\log(p)}=\o.
\end{equation}
On the other hand, there exist $a,A>0$ and $p_0\in\N$ such that
$$ a<\log\big(m_p/M_p^{1/p}\big)<A,\qquad p\geq p_0,$$
what, by~(\ref{equation.M.in.mp}) and taking logarithms, amounts to
$$ \log(a)+\log(p)<\log(M(m_p))<\log(p)+\log(A),\qquad p\geq p_0.$$
Consequently, we see that
\begin{equation}\label{equaAuxilLimit2}
\lim_{p \ri \oo} \frac{\log (M(m_p))}{\log (p)} =1.
\end{equation}
Observe that $M(t)$ is nondecreasing, so for every $t\in(m_{p-1},m_p)$ we have
\begin{align*}
\frac{p}{M(m_p)}&\leq\frac{\nu(t)}{M(t)}\leq \frac{p}{M(m_{p-1})},\\
\frac{\log(M(m_{p-1}))}{\log(m_{p})}&\leq \frac{\log(M(t))}{\log(t)}\leq \frac{\log(M(m_p))}{\log(m_{p-1})}.
\end{align*}
By~(\ref{equation.M.in.mp}) we know that $M(m_p)=p\b_p$ for every $p\in\N$,
so from \eqref{equaLimitBetaOmega} and the first inequalities we see that
$\lim_{t\to\oo}\nu(t)/M(t)=1/\o$.
Now, using~\eqref{equaAuxilLimit1} and \eqref{equaAuxilLimit2} we conclude from the second inequalities that
$\lim_{t \ri \oo} d_{\M}(t)=1/\o$, and also that \eqref{equaRephraseConditionDProxOrder} is satisfied. So, (\ref{OA3:1}) and (\ref{OA4:1}) in Definition~\ref{OAD:1} are valid and $d_{\M}$ is a proximate order. Moreover, by Theorem~\ref{theo.limsup.dM.omega} we deduce that $\o=\o(\M)$.

 (b) $\Leftrightarrow$ (c) Apply Proposition~\ref{prop.beta.regularvariation.bis}.

 (c) $\Rightarrow$ (d) By definition, we know that for every natural number $\ell\ge 2$ the limit
 $$
 \lim_{p\to\infty}\frac{m_{\ell p}}{m_p}=\psi(\ell)\in(0,\infty)
 $$
 exists, and by Theorem~\ref{theo.BojanicSeneta} it equals $\ell^{\o}$, where $\o$ is the index of regular variation of $\m$.

 (d) $\Rightarrow$ (c) Since $\m$ is nondecreasing, it suffices to apply Theorem~\ref{theo.de.haan}.

The value of the different limits or indices involved in the statements is deduced in the course of the proof.
\end{proof}

\begin{exam}\label{exampleOmega}
For the sequences in the Example~\ref{exampleSequences} we easily have that $\o(\M_{\a,\b})=\a$  and $\o(\M_q)=\infty$ for the considered values of $\a$, $\b$ and $q$. So, Theorem~\ref{theorem.charact.prox.order.nonzero} shows that for the sequences $\M_{0,\b}$ and $\M_q$ the function $d_{\M}$ is not a nonzero proximate order. On the contrary, one may easily check that (b) or (d) in that Theorem hold for $\M_{\a,\b}$ whenever $\a>0$, and consequently $d_{\M_{\a,\b}}$ is indeed a nonzero proximate order, although its handling will be difficult in general (in this sense, see Remark~\ref{remaEasyProxOrder}).
\end{exam}

We will show next some necessary conditions for $d_{\M}$ being a nonzero proximate order. For that purpose, we recall the definition of the growth index $\gamma(\M)$ given by V. Thilliez.

\begin{defi}[\cite{thilliez}]
Let $\bM=(M_{p})_{p\in\N_{0}}$ be a strongly regular sequence and $\ga>0$. We say $\bM$ satisfies property
$\left(P_{\ga}\right)$  if there exist a sequence of real numbers $m'=(m'_{p})_{p\in\N_0}$ such that $m'\simeq\m$ and $\left((p+1)^{-\ga}m'_{p}\right)_{p\in\N_0}$
is increasing.
The \textit{growth index} of $\bM$ is
$$\ga(\bM):=\sup\{\ga\in\R:(P_{\ga})\hbox{ is fulfilled}\}\in(0,\infty).$$
\end{defi}

\begin{rema}\label{remaAlmostIncrease}
In~\cite[Prop.\ 4.12]{JimenezSanz}, a link was given between this index and the property of almost increase. We recall that
a sequence $(s_p)_{p\in\N_0}$ of positive real numbers is \textit{almost increasing} if
there exists a constant $M\geq1$ such that
\begin{equation}
s_p\leq M s_{q}, \qquad \text{for every} \quad p,q \in \N_0,\ p\le q.\no
\end{equation}
Then, $\bM$ satisfies property
$\left(P_{\ga}\right)$  if, and only if, the sequence $\left((p+1)^{-\ga}m_{p}\right)_{p\in\N_0}$ is almost increasing.
\end{rema}

We can state now our last result in this section.

\begin{coro}\label{coroConseqRegularVariation}
Let $\M=(M_p)_{p\in\N_0}$ be a (lc) sequence with $\lim_{p\to\infty} m_p=\infty$, and verifying any of the conditions in the previous result.
Then, the following holds:
\begin{enumerate}
\item[(g)]  $\M$ is strongly regular, $\gamma(\M)=\o(\M)$ and
\begin{equation}\label{limit.logmp.over.logp.omega}
\lim_{p\to\infty} \frac{\log(m_p)}{\log(p)}=\o(\M).
\end{equation}
\end{enumerate}
\end{coro}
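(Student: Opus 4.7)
The plan is to exploit the fact that our hypothesis puts all four equivalent conditions (a)--(d) of Theorem~\ref{theorem.charact.prox.order.nonzero} at our disposal, and to extract each of the three assertions of~(g) from the most convenient one. The limit $\lim_p \log(m_p)/\log(p)=\o(\M)$ was already derived as the auxiliary identity~\eqref{equaAuxilLimit1} along the implication (b)$\Rightarrow$(a) of Theorem~\ref{theorem.charact.prox.order.nonzero}, so I would simply quote it. For the strong regularity of $\M$, only (mg) and (snq) remain to be checked. Condition~(b) says that $\log(m_p/M_p^{1/p})$ converges to the positive real number $\o(\M)$, hence $m_p/M_p^{1/p}$ is bounded and Proposition~\ref{propPropiedlcmg}(ii.2) yields (mg). For (snq), I would observe that since $\M$ is (lc) the sequence $\m$ is nondecreasing, hence so is $((p+1)m_p)_{p\in\N_0}$, the quotient sequence of $(p!M_p)_{p\in\N_0}$; therefore $(p!M_p)$ is (lc) and Proposition~\ref{propPropiedlcmg}(iii) applies. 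Taking $\ell=2$ in condition~(d) gives $\lim_p m_{2p}/m_p=2^{\o(\M)}>1$, whence (snq).

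The inequality $\ga(\M)\le\o(\M)$ I would prove by contradiction. If $(P_\ga)$ held for some $\ga>\o(\M)$, Remark~\ref{remaAlmostIncrease} would provide $K\ge 1$ with $(p+1)^{-\ga}m_p\le K(\ell p+1)^{-\ga}m_{\ell p}$ for every $p\in\N_0$ and every fixed integer $\ell\ge 2$. Rearranging and letting $p\to\infty$, condition~(d) would force $\ell^{\o(\M)}\ge \ell^{\ga}/K$, which fails as soon as $\ell$ is large enough, since $\ga>\o(\M)$.

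For the opposite inequality $\ga(\M)\ge\o(\M)$ I would fix $\ga\in(0,\o(\M))$ and verify $(P_\ga)$ via the Bojanic--Seneta representation granted by (c) and Theorem~\ref{theo.BojanicSeneta},
\[
m_p=(p+1)^{\o(\M)}C_p\exp\Big(\sum_{k=1}^p\frac{\delta_k}{k}\Big),\qquad C_p\to C\in(0,\oo),\quad \delta_k\to 0.
\]
Setting $a_p=(p+1)^{-\ga}m_p$, the task is to bound $a_q/a_p$ from below uniformly for $p\le q$. Picking $p_0$ so that $|\delta_k|<(\o(\M)-\ga)/2$ for $k>p_0$ and $C_p/C_{p'}$ stays within a fixed range for $p,p'\ge p_0$, and using $\sum_{k=p+1}^q 1/k\le\log(q/p)$, the positive exponent $\o(\M)-\ga$ dominates the contribution of the slowly-varying factor, quantitatively yielding $a_q/a_p\ge c\,(q/p)^{(\o(\M)-\ga)/2}\ge c$ for $p\ge p_0$ with a universal $c>0$. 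The finitely many indices below $p_0$ are then absorbed in an enlarged constant, so $(a_p)$ is almost increasing and $(P_\ga)$ holds by Remark~\ref{remaAlmostIncrease}; letting $\ga\uparrow\o(\M)$ gives $\ga(\M)\ge\o(\M)$.

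I expect the lower bound $\ga(\M)\ge\o(\M)$ to be the main obstacle, since it is the only part that genuinely requires exploiting the representation of a regularly varying sequence and taming the slowly-varying factor $C_p\exp(\sum\delta_k/k)$ against the power $(p+1)^{\o(\M)-\ga}$; the remaining assertions follow almost mechanically from the conditions in force in Theorem~\ref{theorem.charact.prox.order.nonzero}.
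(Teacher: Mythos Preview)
Your proposal is correct. For the strong regularity and the limit~\eqref{limit.logmp.over.logp.omega} you proceed essentially as the paper does, with only a cosmetic difference: the paper draws (mg) from condition~(d) with $\ell=2$ via Proposition~\ref{propPropiedlcmg}(ii.2.c), whereas you draw it from condition~(b) via~(ii.2.b); both are fine. Your justification that $(p!M_p)_p$ is (lc) is the same as the paper's, phrased slightly differently.

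The only substantive difference concerns the equality $\ga(\M)=\o(\M)$. The paper does not argue this point at all: it simply states that the proof of Theorem~4.19 in~\cite{JimenezSanz} goes through verbatim once one starts from the regular variation of $\m$ with index $\o(\M)$. You instead give a self-contained argument, splitting into the two inequalities. Your upper bound $\ga(\M)\le\o(\M)$ via contradiction with~(d) is clean and correct. Your lower bound, based on the Bojanic--Seneta representation and Remark~\ref{remaAlmostIncrease}, is also correct; the only place to be a bit more careful is the interplay between $q/p$ and $(q+1)/(p+1)$ when you combine the power $(p+1)^{\o(\M)-\ga}$ with the estimate $\sum_{k=p+1}^q 1/k\le\log(q/p)$, but for $p\ge 1$ one has $q/p\le 2(q+1)/(p+1)$, so this costs only a fixed multiplicative constant and the almost-increasing conclusion stands. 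What you gain over the paper is self-containment; what the paper gains is brevity by outsourcing to~\cite{JimenezSanz}.
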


\begin{proof}
Since $\M$ verifies (d) in Theorem~\ref{theorem.charact.prox.order.nonzero} for $\ell=2$, it is clear that $\M$ satisfies the conditions in Proposition~\ref{propPropiedlcmg}, items (ii.2.c) and (iii.2), and so $\M$ is also (mg) and (snq) (note that $(p!M_p)_{p\in\N_0}$ is (lc), being the product of two sequences which are (lc)).

The equality $\gamma(\M)=\o(\M)$ admits the same proof as in Theorem 4.19 in~\cite{JimenezSanz}, once we depart from the regular variation of $\m$ with index $\o(\M)$ (by Theorem~\ref{theorem.charact.prox.order.nonzero}(c)).

Finally, the equality~\eqref{limit.logmp.over.logp.omega} has been deduced in the proof that (b) $\Rightarrow$ (a) in the previous theorem.
\end{proof}

\begin{rema}\label{remaCorrections}
As explained in the introduction, some statements for strongly regular sequences in the paper~\cite{JimenezSanz} by the first two authors are not correct. The forthcoming Example~\ref{example.dM.not.p.o} shows that the implication (iii) $\Rightarrow$ (iv) in Theorem~\ref{theorem.condition3.implies.beta.converge} fails in general, while the converse, as shown here, is valid. Indeed, for $\M$ strongly regular one has (i) $\Leftrightarrow$ (iv) and (ii) $\Leftrightarrow$ (iii). It turns out that the condition (iii), which was involved in the statements of Proposition 4.6, Theorem 4.10 and Theorem 4.19 in~\cite{JimenezSanz}, should be substituted throughout by condition (iv) or, equivalently, by the regular variation of $\m$ with positive index. Moreover, Remarks 3.15 and 3.16 in~\cite{JimenezSanz} become meaningless.
\end{rema}

\begin{exam}\label{example.dM.not.p.o}
Let $\M$ be defined using the sequence of  quotients
$(m_p)_{p\in \N_0}$. We put $m_0=m_1=1$, $m_2=m_3=2$ and $m_4=m_5=m_6=m_7=6$; for every $k\in\N$ and $2^{2^k+1}\leq p <2^{2^{k+1}+1}$ we
define $m_p$ as follows:
$$m_p= 2^{2^k} 3\left(\frac{2^{2^k}}{3}\right)^{\frac{j-1}{2^k-1}},  \qquad 2^{2^k+j}\le p\le 2^{2^k+j+1}-1,\quad
j=1,2,\dots, 2^k.$$
Since

$m_8=12$, in order to obtain the property (lc) we need to show
that $(m_p)_{p\geq 8}$ is nondecreasing. For every $k\in\N$
there are three possibilities:
\begin{enumerate}
 \item If $p,p+1\in[2^{2^k+j},2^{2^k+j+1}-1 ]$ for $j=1,\dots,2^k$, we have that $m_{p+1}/m_p=1$.
 \item If $p=2^{2^k+j+1}-1$ for $j=1,\dots,2^k-1$,  we have that $m_{p+1}/m_p=(2^{2^k}/3)^{1/(2^k-1)}$, which is greater
 than $1$ since $k\in\N$.
 \item If $p=2^{2^{k+1}+1}-1$, we have that $m_{p+1}/m_p= 2^{2^{k+1}} 3/ 2^{2^{k+1}} =3$.
\end{enumerate}

Next we analyze the quotients $m_{2p}/m_p$. By definition, for any  $p\in [2^{2^k+j},2^{2^k+j+1}-1]$
we have that $2p$ belongs to the adjacent interval $[2^{2^k+j+1},2^{2^k+j+2}-1]$. We distinguish two cases:
\begin{enumerate}
 \item If $p \in[2^{2^k},2^{2^k+1}-1 ]$ we have that $m_{2p}/m_p= 3$.
 \item If $p\in [2^{2^k+j},2^{2^k+j+1}-1 ] $ for $j=1,\dots,2^k-1$,  we have that
 $m_{2p}/m_p=(2^{2^k}/3)^{1/(2^k-1)}$.
\end{enumerate}
We observe that
 $$\lim_{k\ri\oo} \frac{2^{2^k/(2^k-1)}}{3^{1/(2^k-1)}}=2.$$
From both cases, we have that
$$ 1<2=\liminf_{p\ri\oo}\frac{m_{2p}}{m_p} \leq \limsup_{p\ri\oo}\frac{m_{2p}}{m_p}=3<\oo.$$
Using Proposition~\ref{propPropiedlcmg}, items (ii.2) and (iii), we see that $\M$ is (mg) and (snq). However, the limit
$$\lim_{p\ri\oo}\frac{m_{2p}}{m_p}$$
does not exist, then condition (d) in Theorem~\ref{theorem.charact.prox.order.nonzero} is violated and
$d_{\M}(t)$ is not a nonzero proximate order.

Next, we are going to see that $\bm\simeq\bl$, where the sequence $\bl=(\ell_p)_{p\in\N_0}$, with $\ell_p=p+1$ for every $p\in\N_0$, corresponds to the Gevrey sequence of order 1. For every $p\geq8$ and
$2^{2^k+j}\le p\le 2^{2^k+j+1}-1$, we have that
$$\frac{2^{2^k} 3\left( 2^{2^k}/3\right)^{\frac{j-1}{2^k-1}}}{2^{2^k+j+1}} \leq\frac{m_p}{p}\leq \frac{2^{2^k} 3\left(2^{2^k}/3\right)^{\frac{j-1}{2^k-1}}}{2^{2^k+j}}.$$
Then
$$ 3^{\frac{2^k-j}{2^k-1}}  2^{\frac{j-2^k}{2^k-1}-1} \leq\frac{m_p}{p}\leq 3^{\frac{2^k-j}{2^k-1}}  2^{\frac{j-2^k}{2^k-1}}. $$
Since $j=1,2,\dots, 2^k$, we see that
$$  2^{-2} \leq\frac{m_p}{p}\leq 3, $$
from where the equivalence is clear. Since the existence, and the value if it exists, of the limit appearing in~(\ref{limit.logmp.over.logp.omega}) are stable under equivalence, and moreover it is obvious that $\lim_{p\to\oo}\log(\ell_p)/\log(p)=1$, we deduce that $\m$  satisfies~(\ref{limit.logmp.over.logp.omega}) with $\o(\M)=1$.
\end{exam}

\section{Sequences admitting a nonzero proximate order}\label{sectSequencesAdmitProxOrder}

Example~\ref{example.dM.not.p.o} provides a strongly regular sequence $\M$ such that $d_{\M}(t)$ is not a proximate order. However, this sequence is equivalent to $\L=(p!)_{p\in\N_0}$, and, as indicated in Example~\ref{exampleOmega}, $d_{\L}$ is  a nonzero proximate order (in particular, we deduce that the property of $d_{\M}$ being a proximate order is not stable under equivalence of sequences). So, we may obtain a satisfactory summability theory in the Carleman ultraholomorphic class associated to $\L$, which coincides with that associated to $\M$. This shows that asking for $d_{\M}$ to be a nonzero proximate order is too strong a restriction for a (lc) sequence $\M$ with $\bm$ tending to infinity, and one could ask instead for:

\medskip
(e) There exists a (lc) sequence $\L$, with quotients tending to infinity, such that $\L\approx\M$ and $d_{\L}(t)$ is a nonzero proximate order.

\medskip

On the other hand, the second author had already observed~\cite[Remark\ 4.11(iii)]{SanzFlat} that, for the construction of nontrivial flat functions in optimal sectors, $d_{\M}$ need not be a nonzero proximate order, but it is enough that there exist nonzero proximate orders close enough to $d_{\M}$, in the following sense.

\begin{defi}\label{def.admit.p.o}
Let $\M=(M_p)_{p\in\N_0}$ be a (lc) sequence with $\lim_{p\to\infty} m_p=\infty$. We say that $\M$ \textit{admits a proximate order} if there exist a proximate order $\ro(t)$
 and constants $A$ and $B$ such that
\begin{equation}\label{eqAdmitsProximateOrder}
A\leq \log(t)(\ro(t)-d_{\M}(t)) \leq B, \qquad t\textrm{ large enough}.
\end{equation}
\end{defi}

Observe that, if $d_{\M}(t)$ is a proximate order and $\rho(t)$ is another proximate order equivalent to $d_{\M}(t)$, then one has
$$
\lim_{t\to\infty}\log(t)(\ro(t)-d_{\M}(t))=0,
$$
and so \eqref{eqAdmitsProximateOrder} is verified.

In case $\M$ admits a proximate order, $d_{\M}$ verifies all the properties of proximate orders except possibly \eqref{OA4:1}, since it is clear from the definition of admissibility that
 $\lim_{t\to\oo}d_{\M}(t)=\lim_{t\to\oo}\ro(t)$ exists.

\begin{rema}\label{remaEasyProxOrder}
The admissibility condition is interesting even if  $d_{\M}$ is a proximate order. For example, consider the sequence $\M_{\a,\b}$ in Example~\ref{exampleSequences}, with $\a>0$, and let us put $M_{\a,\b}(t)$, $d_{\a,\b}(t)$, and so on, to denote the corresponding associated functions. Since for large $t$ we have
$$
c_2t^{1/\a}\log^{-\b/\a}(t)\le M_{\a,\b}(t)\le c_1t^{1/\a}\log^{-\b/\a}(t)
$$
for suitable constants $c_1,c_2>0$ (see~\cite[Example\ 1.2.2]{thilliez2}), then $$\log(c_2)\le\log(t)(d_{\a,\b}(t)-\ro_{\a,\b}(t))\le\log(c_1) \quad\textrm{eventually}
 $$
(see Example~\ref{exampleOrders} for the definition of $\ro_{\a,\b}$). This shows that the proximate order $\ro_{\a,\b}(t)$ is admissible for $\M_{\a,\b}$, and therefore, for our purposes, it may substitute $d_{\a,\b}(t)$ whenever it is convenient. In particular, when working with Gevrey ultraholomorphic classes one may consider the constant order $\ro_{\a,0}(t)\equiv 1/\a$, as expected.
\end{rema}

In order to show that the requirement (e) and the admission of a nonzero proximate order are equivalent for a sequence $\M$, we need to construct well-behaved sequences from proximate
orders.

\subsection{Strongly regular sequences defined from nonzero proximate orders}

Departing from a nonzero proximate order,
and for every element $V$ in the class $\Bet(\ga,\ro(r))$
given by L.S.~Maergoiz~\cite{Maergoiz} (see Theorem~\ref{theorpropanalproxorde} and Definition~\ref{defClassMaergoiz}), we will construct a well-behaved sequence $(M_p^V)_{p\in\N_0}$.
This procedure uses the same
argument by S.~Mandelbrojt~\cite{mandelbrojt} and H.~Komatsu~\cite{komatsu} to recover  a sequence from
 its associated function $M(t)$, or by J.~Bonet, R.~Meise and S.N.~Melikhov~\cite{bonetmeisemelikhov} when they construct a weight sequence from a weight function.

\begin{defi}
Let $\ro(t) \ri \ro>0$ be a proximate order,  $\ga>0$ and $V \in \Bet(\ga, \ro(t) )$. We define its associated sequence by
$$
M^V_p:=\sup_{t>0} \frac{t^p}{e^{V(t)}},\qquad p\in\N_0,
$$
or equivalently,
$$
\log(M^V_p)=\sup_{t>0} (p\log(t)-V(t)),\qquad p\in\N_0.
$$
\end{defi}

Note that, since $\rho>0$, for every $p\in\N_0$ we have that $\lim_{t\ri\oo} t^p/e^{V(t)}=0$. As $e^{-V(1)}>0$, we see  that $M^V_p \in (0,\oo)$.

The forthcoming results aim at showing, in Theorem~\ref{theorem.construct.sequences.from.prox.order}, that the sequence $(M_p^V)_{p\in\N_0}$ is strongly regular. We will frequently use the properties, denoted by (I)-(VI) in Theorem~\ref{theorpropanalproxorde}, of the elements in the class $\Bet(\ga, \ro(t) )$.

  \begin{prop}\label{MVlogconvex}
  Let $\ro(t) \ri \ro>0$ be a proximate order,  $\ga>0$ and $V \in \Bet(\ga, \ro(t) )$, then the
  sequence $(M^V_p)_{p\geq0}$ is logarithmically convex and $M^V_0=1$.
 \end{prop}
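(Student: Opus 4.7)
The claim splits cleanly into two independent assertions, and both are short. I would handle them in the obvious order.

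For the identity $M_0^V=1$, the plan is to read off the value from the defining supremum: $M_0^V=\sup_{t>0}e^{-V(t)}=e^{-\inf_{t>0}V(t)}$. Property (III) of the class $\Bet(\ga,\ro(r))$ in Theorem~\ref{theorpropanalproxorde} says that $V$ is strictly positive on $(0,\infty)$, strictly increasing, and tends to $0$ as $r\to 0^+$. Hence $\inf_{t>0}V(t)=0$ (attained only in the limit), so $M_0^V=e^0=1$. Nothing more is needed here.

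For logarithmic convexity, the plan is the standard supremum-trick that does not require any of the finer properties (IV)--(VI), only that $M_p^V$ is well-defined and positive (which is noted just before the proposition, using $\ro>0$ to ensure the supremum is finite for every $p$). For any fixed $t>0$ and $p\in\N$ I would write
\begin{equation*}
\frac{t^{p}}{e^{V(t)}}=\sqrt{\frac{t^{p-1}}{e^{V(t)}}\cdot\frac{t^{p+1}}{e^{V(t)}}}\le\sqrt{M^{V}_{p-1}\,M^{V}_{p+1}},
\end{equation*}
and then take the supremum over $t>0$ on the left to conclude $(M^{V}_{p})^{2}\le M^{V}_{p-1}M^{V}_{p+1}$.

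There is no real obstacle; the only thing to be careful about is the first part, where one must invoke property (III) to guarantee $V>0$ on the whole half-line together with $V(r)\to 0$ as $r\to 0^+$, so that the infimum of $V$ really is $0$ and the supremum defining $M_0^V$ really is $1$ rather than being strictly smaller.
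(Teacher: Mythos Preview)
Your argument is correct and coincides with the paper's own proof: both use the factorization $t^{2p}/e^{2V(t)}=(t^{p-1}/e^{V(t)})(t^{p+1}/e^{V(t)})$ and bound each factor by the corresponding supremum, and both invoke property (III) to see that $\sup_{t>0}e^{-V(t)}=1$. The only cosmetic difference is that you pass through the square root while the paper works with $(M_p^V)^2$ directly.
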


 \begin{proof}
  For any $p\in\N$ we have that
  $$(M^V_p)^2=\sup_{t>0} \frac{t^{2p}}{e^{2V(t)}} = \sup_{t>0} \frac{t^{p-1+p+1}}{e^{2V(t)}} \leq \sup_{t>0} \frac{t^{p-1}}{e^{V(t)}}
  \sup_{t>0} \frac{t^{p+1}}{e^{V(t)}} = M^V_{p-1}M^V_{p+1}.$$
 Using (III), $V(t)$ is strictly increasing and $\lim_{t\ri0}V(t)=0$, and so
  $$M^V_0=\sup_{t>0} e^{-V(t)} = \exp\left(\lim_{t\ri0}V(t)\right)=1. $$
 \end{proof}

 \begin{lemma}\label{prop.properties.V.A}
  Let $\ro(t) \ri \ro>0$ be a proximate order,  $\ga>0$ and $V \in \Bet(\ga, \ro(t) )$. We consider the function
  $$A(s):=\ro_V'(s)s\log(s)+\ro_V(s),$$
where $\rho_V(s)=\log(V(s))/\log(s)$ (see Theorem~\ref{theorpropanalproxorde}(VI)).  Then the function $V(s)A(s)$ has the following properties:
  \begin{enumerate}[(1)]
  \item $\lim_{s\ri\oo} V(s)A(s)=\oo$,
  \item $V(s)A(s)\geq 0$ in (0,$\oo$),
 \item $V(s)A(s)$ is strictly increasing in (0,$\oo$),
 \item $\lim_{s\ri0}V(s)A(s)=L\geq 0.$
\end{enumerate}

 \end{lemma}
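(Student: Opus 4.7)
My plan rests on a single clean identity: writing $V(s) = s^{\rho_V(s)}$ and logarithmically differentiating gives
\[
\frac{V'(s)}{V(s)} = \rho_V'(s)\log(s) + \frac{\rho_V(s)}{s},
\]
so that $sV'(s) = V(s)\bigl(\rho_V'(s)s\log(s) + \rho_V(s)\bigr) = V(s)A(s)$. Thus the lemma becomes an assertion about the function $sV'(s)$, which is well-defined and analytic on $(0,\infty)$ since $V\in\Bet(\ga,\ro(t))$ is analytic on $L(\ga)\supseteq(0,\infty)$. I would state this identity at the start and then verify the four properties in the order (3), (2), (4), (1), which is the most natural logical sequence.

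For (3) the key input is property (IV) of Theorem~\ref{theorpropanalproxorde}: the function $t\mapsto V(e^t)$ is strictly convex on $\R$, hence its derivative $\frac{d}{dt}V(e^t) = e^tV'(e^t)$ is strictly increasing in $t$. After the substitution $s=e^t$, this is precisely the assertion that $s\mapsto sV'(s)$ is strictly increasing on $(0,\infty)$. Property (2) then follows by combining $V(s)>0$ (from (III)) with $V'(s)\ge 0$, the latter being a consequence of strict monotonicity of $V$ (III) together with analyticity. For (4), since $sV'(s)$ is strictly increasing on $(0,\infty)$ and bounded below by $0$, the limit $L:=\lim_{s\to 0^+}sV'(s)$ exists in $[0,\infty)$.

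For (1) I would use the definition of proximate order applied to $\rho_V$, which is guaranteed by (VI): from condition (C) one has $\rho_V(s)\to\ro>0$, and from condition (D) one has $s\rho_V'(s)\log(s)\to 0$, so $A(s)\to\ro$ as $s\to\infty$. On the other hand, $V(s)=s^{\rho_V(s)}\to\infty$ because $\rho_V(s)\to\rho>0$ forces $\log V(s)=\rho_V(s)\log s\to\infty$. Multiplying these two limits yields $V(s)A(s)\to\infty$, as required.

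The only mildly delicate point is that $\rho_V$ is only piecewise continuously differentiable, so $A(s)$ is not defined at the exceptional sequence of points; however, the identity $V(s)A(s)=sV'(s)$ shows that the function extends continuously (in fact analytically) to all of $(0,\infty)$, so this causes no genuine obstacle. Apart from this, the proof is essentially bookkeeping around the identity $V(s)A(s)=sV'(s)$, and I expect no step to require more than a short paragraph.
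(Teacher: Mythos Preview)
Your proposal is correct and takes essentially the same approach as the paper: both proofs rest on the identity $sV'(s)=V(s)A(s)$ (the paper writes it as $V'(s)=\frac{V(s)}{s}A(s)$), and then deduce (3) from property (IV), (2) from property (III), (4) from monotonicity plus nonnegativity, and (1) from the proximate order conditions (C) and (D) for $\rho_V$ together with $V(s)\to\infty$. The only differences are cosmetic: you state the identity up front and reorder the items, and you explicitly flag the domain issue for $A(s)$ that the paper passes over silently.
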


 \begin{proof}

By properties (C) and (D) of proximate orders,
 $$A(s)=\ro_V'(s)s\log(s)+\ro_V(s)\ri \ro, \qquad s\ri\oo.$$
Consequently, $\lim_{s\ri\oo} V(s)A(s)=\oo. $
By property (III) of functions in $\Bet(\ga, \ro(t))$, $V(s)$  is strictly increasing, so
\begin{equation*}
V'(s)= \frac{V(s)}{s}[\ro_V'(s)s\log(s)+\ro_V(s)] = \frac{V(s)}{s} A(s)\geq 0
\end{equation*}
for every $s\in(0,\oo)$. Consequently, as $V(s)$ is positive, $A(s)$ is nonnegative, and their product is also nonnegative.
We also see, by property (IV) of functions in $\Bet(\ga, \ro(t))$, that
$V(e^t)$ is strictly convex in $\R$, so $(V(e^t))'$ is strictly increasing in $\R$. We see that
$$ (V(e^t))'= (\exp(\ro_V(e^t) t))' = \exp(\ro_V(e^t)t)[\ro_V'(e^t) t e^t+\ro_V(e^t) ] = V(e^t)A(e^t) $$
for every $t\in\R$. Making the change $e^t=s$, we deduce that the function $V(s) A(s)$ is strictly increasing in
$(0,\oo)$. Using that $V(s)A(s)$ is nonnegative and strictly increasing  we assure that
$$\lim_{s\ri0}V(s)A(s)=:L\geq 0.$$
 \end{proof}

 \begin{lemma}\label{lemmafuncg}
    Let $\ro(t) \ri \ro>0$ be a proximate order,  $\ga>0$ and $V \in \Bet(\ga, \ro(t) )$. For every $p\in\N$
    we consider the function defined in $(0,\oo)$ by
    $$g_{p}(s)= V(s)- p\log(s).$$
    For $p$ large enough, $g_{p}$ reaches its minimum value in a point $s_p\in(0,\oo)$. The
    sequence $(s_p)_{p\geq p_0}$ is increasing and
    $\lim_{p\ri\oo} s_p=\oo$.
 \end{lemma}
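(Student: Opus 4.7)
The plan is to differentiate $g_p$ and read off everything from the previous Lemma~\ref{prop.properties.V.A}. Specifically, since $V$ is differentiable on $(0,\infty)$, so is $g_p$, and the previous Lemma already tells us that
\begin{equation*}
V'(s)=\frac{V(s)}{s}A(s),
\end{equation*}
so
\begin{equation*}
g_p'(s)=V'(s)-\frac{p}{s}=\frac{V(s)A(s)-p}{s}.
\end{equation*}
Hence the critical points of $g_p$ are exactly the solutions of the equation $V(s)A(s)=p$.

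The first step is to produce $s_p$ and the threshold $p_0$. By Lemma~\ref{prop.properties.V.A} the function $\phi(s):=V(s)A(s)$ is continuous on $(0,\infty)$, strictly increasing, with $\lim_{s\to 0^+}\phi(s)=L\geq 0$ and $\lim_{s\to\infty}\phi(s)=\infty$. Therefore, for every $p$ with $p>L$, there exists a unique $s_p\in(0,\infty)$ such that $\phi(s_p)=p$; I would take $p_0$ to be any integer with $p_0>L$. For such $p$, the sign analysis $g_p'(s)<0$ for $s<s_p$ and $g_p'(s)>0$ for $s>s_p$ (which follows from strict monotonicity of $\phi$) shows that $s_p$ is a strict global minimum of $g_p$; alternatively, one can invoke property (IV) of $V\in\mathfrak{B}(\gamma,\rho(r))$, i.e.\ strict convexity of $t\mapsto V(e^t)$, to see that $g_p(e^t)=V(e^t)-pt$ is strictly convex in $t$, so the critical point is unique and is a minimum.

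For the monotonicity and divergence of $(s_p)_{p\geq p_0}$, the defining relation $\phi(s_p)=p$ together with strict monotonicity of $\phi$ immediately gives $s_{p+1}>s_p$ (because $\phi(s_{p+1})=p+1>p=\phi(s_p)$). Similarly, since $\phi(s_p)=p\to\infty$ and $\phi$ is increasing with $\phi(s)\to\infty$ only as $s\to\infty$, one concludes $s_p\to\infty$; more concretely, if $s_p$ stayed bounded by some $M$, then $p=\phi(s_p)\leq\phi(M)<\infty$ for all $p$, which is absurd.

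I do not expect any real obstacle: the content of the lemma is essentially a repackaging of Lemma~\ref{prop.properties.V.A}, and the only mild point of care is to note that $g_p$ really does attain its infimum (guaranteed because $g_p(s)\to\infty$ both as $s\to 0^+$, since $V(s)\to 0$ while $-p\log s\to\infty$, and as $s\to\infty$, since $V(s)=s^{\rho_V(s)}$ with $\rho_V(s)\to\rho>0$ dominates $p\log s$), so that the critical point found above is indeed where the minimum is achieved.
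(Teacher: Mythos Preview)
Your proof is correct and follows essentially the same approach as the paper: both compute $g_p'(s)=(V(s)A(s)-p)/s$, invoke the properties of $V(s)A(s)$ from Lemma~\ref{prop.properties.V.A} to locate the unique critical point for $p$ large, and use the strict monotonicity and divergence of $V(s)A(s)$ to deduce that $(s_p)$ is increasing and unbounded. The only cosmetic difference is the order of presentation (the paper first establishes existence of a minimum from the boundary behaviour of $g_p$, then uniqueness from the critical-point equation, whereas you find the critical point first and then argue it is a global minimum), but the content is the same.
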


 \begin{proof}
 Using that $\lim_{s\ri0} V(s)=0$ we see that $\lim_{s\ri 0} g_{p}(s)=\oo$.
By (VI), $\ro_V(s)$ is a proximate order equivalent to $\ro(s)$, and by Remark~\ref{remaEquivProxOrder}, $\lim_{s\ri\oo} \ro_V(s)=\ro>0$, so we have that
 $$\lim_ {s\ri\oo} g_{p}(s) =\oo.$$
 Then, $g_{p}$ reaches its minimum at a point $s_p$ in $(0,\oo)$. Let us see that it is unique for $p$ large enough. We calculate
 $$g'_{p}(s)=-\frac{p}{s}+(s^{\ro_V(s)})'= -\frac{p}{s}+ V(s) [\ro_V'(s)\log(s)+\frac{\ro_V(s)}{s} ],$$
which vanishes if, and only if, $ V(s) A(s)=p$.
By the properties in Lemma~\ref{prop.properties.V.A},
given $p\in\N$ large enough, there exists only one point $s_p\in(0,\oo)$ verifying $V(s_p)A(s_p)=p$. So $g_{p}(s)$
has a unique minimum in $s_p\in (0,\oo)$.

Furthermore, as $V(s)A(s)$ is strictly increasing and $\lim_{s\ri\oo} V(s)A(s)=\oo$, we deduce that the sequence $(s_p)_{p\geq p_0}$
is strictly increasing and $\lim_{p\ri\oo} s_p=\oo$.
 \end{proof}

  \begin{prop}\label{theorelMandU}
  Let $\ro(t) \ri \ro>0$ be a proximate order,  $\ga>0$ and $V \in \Bet(\ga, \ro(t) )$. Then, there exists $B>1$ such that
   \begin{equation}\label{equation.weak.UyM}
   \left(\frac{1}{B}\right)^p U(p)^p\leq M^V_p\leq B^p U(p)^p, \qquad p \in \N,
   \end{equation}
  where $U(s)$ is the inverse of $V(t)$ given in Remark~\ref{remaInverseRegularlyVarying}.
 \end{prop}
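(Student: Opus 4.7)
The strategy is to compute $M_p^V$ as an explicit minimum and then estimate the two resulting factors separately via $A(s_p)\to\rho$ and the regular variation of $U=V^{-1}$.

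By Lemma~\ref{lemmafuncg}, for $p$ large enough $g_p(s)=V(s)-p\log(s)$ attains its minimum at a unique $s_p\in(0,\infty)$ characterized by $V(s_p)A(s_p)=p$, so
$$M_p^V=\exp(-g_p(s_p))=\frac{s_p^p}{e^{V(s_p)}}.$$
The proposition thus reduces to two claims: (a) $e^{V(s_p)}$ lies between two constants raised to the $p$-th power, and (b) $s_p$ is comparable to $U(p)$ up to a bounded factor.

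For (a), Lemma~\ref{prop.properties.V.A} together with property (C) of proximate orders gives $A(s)\to\rho$ as $s\to\infty$. Since $s_p\to\infty$, we have $A(s_p)\in[c_1,c_2]$ for some $0<c_1\le c_2$ and $p$ large; then $V(s_p)=p/A(s_p)$ yields $e^{p/c_2}\le e^{V(s_p)}\le e^{p/c_1}$.

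For (b), the identities $V(U(p))=p=A(s_p)V(s_p)$ give
$$\frac{U(p)}{s_p}=\frac{U(A(s_p)V(s_p))}{U(V(s_p))}.$$
By Proposition~\ref{propRegVarOrdAprox}, $V(r)=r^{\rho_V(r)}$ is regularly varying of index $\rho>0$, and by Remark~\ref{remaInverseRegularlyVarying} its inverse $U$ is regularly varying of index $1/\rho$; hence $U(\lambda r)/U(r)\to\lambda^{1/\rho}$ uniformly on compact subsets of $(0,\infty)$. Taking $r=V(s_p)\to\infty$ and letting $\lambda=A(s_p)$ range in the compact set $[c_1,c_2]$, this uniform convergence forces $U(p)/s_p\to\rho^{1/\rho}$, whence $s_p/U(p)\in[1/E,E]$ for some $E>0$ and $p$ large.

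Multiplying the estimates in (a) and (b), for all sufficiently large $p$ the quotient $M_p^V/U(p)^p$ lies between $(Ee^{1/c_1})^{-p}$ and $(Ee^{-1/c_2})^p$, which proves \eqref{equation.weak.UyM} on a tail. The finitely many remaining indices cause no trouble: by property (III) of Theorem~\ref{theorpropanalproxorde}, $V$ is a strictly increasing homeomorphism of $(0,\infty)$ onto itself, so $U(p)$ is defined and positive for every $p\in\N$, and $M_p^V$ is positive and finite by Proposition~\ref{MVlogconvex}; enlarging $B$ to dominate the finitely many exceptional ratios yields the full statement. The step I expect to require the most care is the uniform control of $U(\lambda r)/U(r)$ over the bounded range $\lambda=A(s_p)$, which rests precisely on the fact that regular variation for $U$ is uniform on compact $\lambda$-sets.
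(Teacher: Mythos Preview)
Your proof is correct and follows essentially the same route as the paper's: both identify $M_p^V$ via the critical point $s_p$ (the paper writes $t_p$) satisfying $V(s_p)A(s_p)=p$, use $A(s_p)\to\rho$ to bound the exponential factor $e^{V(s_p)}=e^{p/A(s_p)}$, and invoke the regular variation of $U$ to compare $s_p$ with $U(p)$. The only cosmetic difference is that the paper writes $s_p=U(p/A(s_p))$, sandwiches it between $U(p/(\rho\pm\epsilon))$ by monotonicity of $U$, and then applies regular variation on the fixed interval $[1/(\rho+\epsilon),1/(\rho-\epsilon)]$, whereas you apply the uniform regular variation directly with the moving parameter $\lambda=A(s_p)\in[c_1,c_2]$; both yield the same tail estimate, and the extension to all $p\in\N$ by enlarging $B$ is identical.
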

 \begin{proof}
 For simplicity we write $(M_p)_{p\geq0}$ for the sequence $(M^V_p)_{p\geq0}$.
 We consider the function $h_p(t)=t^p e^{-V(t)}$, and observe that
 $$g_{p}(t)=p\log \left(\frac{1}{t}\right)+V(t)=\log \left(\frac{1}{h_p(t)}\right).$$
 So $h_p(t)=\exp(-g_{p}(t))$, and as in Lemma~\ref{lemmafuncg} we deduce that
 $$\lim_{t\ri0}h_p(t) = 0, \qquad\lim_{t\ri\oo}h_p(t) = 0.$$
  Furthermore, for $p$ large enough, $h_{p}$ reaches its maximum value in a unique point $t_p\in(0,\oo)$, the sequence $(t_p)_{p\geq p_0}$ is increasing,
    $\lim_{p\ri\oo} t_p=\oo$ and $p=V(t_p)A(t_p)$.

  As $A(t)\ri\ro$ as $t\ri\oo$,
  if we fix $0<\ep<\ro$, there exists $K>0$ such that
  $$A(t)\in(\ro-\ep,\ro+\ep),\qquad t\geq K.$$
There exists a natural number $p_{K}$ such that for $p\geq p_{K}$ we have $t_p>K$.

Denote by $U$ the inverse function of $V$. Since $V$ is strictly increasing in $(0,\infty)$, the function $U$ is also defined in $(0,\infty)$.
As $p/A(t_p)\geq p/(\ro+\ep)$ for $p\geq p_K$, and
$V(t_p)A(t_p)=p$, we see that
$$U\left(\frac{p}{A(t_p)}\right)=t_p, \qquad p\geq p_K.$$
By the definition of $M_p$, for $p\geq p_K$ we observe that
$$M_p=\sup_{t>0} \frac{t^p}{e^{V(t)}}=\sup_{t>0} h_p(t)=h_p(t_p)= \frac{\left(U(p/A(t_p))\right)^p}{e^{V[U(p/A(t_p)]}}= \frac{\left(U(p/A(t_p))\right)^p}{e^{p/A(t_p)}}.$$
Using that the function $U(s)$ is increasing and that $A(t_p)\in(\ro-\ep,\ro+\ep) $,
we see that
$$\frac{\left(U(p/(\ro+\ep))\right)^p}{e^{p/(\ro-\ep)}}\leq M_p\leq \frac{\left(U(p/(\ro-\ep))\right)^p}{e^{p/(\ro+\ep)}},
\qquad p\geq p_K.$$
Finally, recall that $U$ is regularly varying (see Remark~\ref{remaInverseRegularlyVarying}), that is,
$$\lim_{s\ri\oo}\frac{U(rs)}{U(s)}=r^{1/\ro}$$
uniformly in the compact sets of $(0,\oo)$. So, considering the interval $[1/(\ro+\ep),1/(\ro-\ep)]$, we can assure that there exist
$P\in\N$, $P\geq p_K$, and $B>1$ such that
 $$\left(\frac{1}{B}\right)^p U(p)^p\leq M_p\leq B^p U(p)^p, \qquad p \in \N, \qquad p>P.$$
We conclude by suitably enlarging the constant $B$.
\end{proof}

\begin{theo}\label{theorem.construct.sequences.from.prox.order}
  Let $\ro(t) \ri \ro>0$ be a proximate order,  $\ga>0$ and $V \in \Bet(\ga, \ro(t) )$. Then, the
  sequence $(M^V_p)_{p\geq0}$ is strongly regular.
 \end{theo}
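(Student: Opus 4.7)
The plan is as follows. Proposition~\ref{MVlogconvex} already gives (lc) and $M^V_0=1$, so I only need to verify the moderate growth (mg) and strong non-quasianalyticity (snq) conditions. I will approach both through the asymptotic behavior of the quotients $m^V_p:=M^V_{p+1}/M^V_p$: by Proposition~\ref{propPropiedlcmg}(ii.2), (mg) will follow once I show $\sup_p m^V_{2p}/m^V_p<\oo$; and by Proposition~\ref{propPropiedlcmg}(iii), since $(p!\,M^V_p)$ is a product of two (lc) sequences and hence (lc), (snq) will follow once I exhibit some $k\ge 2$ with $\liminf_p m^V_{kp}/m^V_p>1$.

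The key elementary estimate is the sandwich $t_p\le m^V_p\le t_{p+1}$ for $p$ large enough, where $t_p$ is the unique minimizer of $g_p$ supplied by Lemma~\ref{lemmafuncg}. This is obtained by plugging $t=t_p$ into the supremum defining $M^V_{p+1}$ (yielding $M^V_{p+1}\ge t_p\, M^V_p$) and $t=t_{p+1}$ into the supremum defining $M^V_p$ (yielding $M^V_p\ge M^V_{p+1}/t_{p+1}$).

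To extract asymptotics, the identity $V(t_p)A(t_p)=p$ from the proof of Lemma~\ref{lemmafuncg} rewrites as $t_p=U(p/A(t_p))$, with $U$ the inverse of $V$ from Remark~\ref{remaInverseRegularlyVarying}. Because $\ro_V$ is a proximate order tending to $\ro$ and satisfies condition (D) of Definition~\ref{OAD:1}, $A(s)\ri\ro$ as $s\ri\oo$, so $1/A(t_p)\ri 1/\ro$. Since $U$ is regularly varying of index $1/\ro$ with $U(\lambda x)/U(x)\ri\lambda^{1/\ro}$ uniformly on compact subsets of $(0,\oo)$ (Proposition~\ref{propRegVarOrdAprox} applied to $U$), I conclude $t_p\sim(1/\ro)^{1/\ro}U(p)$, and together with $U(p+1)/U(p)\ri 1$ and the sandwich this gives
$$\lim_{p\ri\oo}\frac{m^V_{kp}}{m^V_p}=k^{1/\ro}\quad\text{for every integer }k\ge 1.$$

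The proof now closes: for $k=2$ the limit $2^{1/\ro}$ is finite, so $\sup_p m^V_{2p}/m^V_p<\oo$ and (mg) holds; and since $\ro>0$ one has $2^{1/\ro}>1$, giving $\liminf_p m^V_{2p}/m^V_p>1$ and hence (snq). The main technical hurdle is the uniform passage from $t_p=U(p/A(t_p))$ with $A(t_p)\ri\ro$ to $t_p\sim(1/\ro)^{1/\ro}U(p)$: because the dilation factor $1/A(t_p)$ depends on $p$, pointwise regular variation of $U$ is insufficient, and one must invoke its uniform form on compact subsets of $(0,\oo)$.
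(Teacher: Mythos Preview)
Your argument is correct, and it follows a genuinely different (and in fact sharper) route than the paper's.

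The paper proves (mg) by a direct estimate on $M^V_{2p}$: it bounds $M^V_{2p}\le\big(\sup_t t^p e^{-V(t)/2}\big)^2$, locates the maximizer of $t^p e^{-V(t)/2}$ via the equation $V(t)A(t)=2p$, and then combines the regular variation of $U$ with the a priori bound $(1/B)^pU(p)^p\le M^V_p\le B^pU(p)^p$ from Proposition~\ref{theorelMandU} to obtain $M^V_{2p}\le C^p(M^V_p)^2$. Only after (mg) is established does the paper pass to the quotients, using $m^V_p\le A^2(M^V_p)^{1/p}\le A^2m^V_p$ together with Proposition~\ref{theorelMandU} to deduce $(1/B)U(p)\le m^V_p\le A^2B\,U(p)$, and finally chooses $k$ large so that $k^{1/\ro}/(2A^2B^2)>1$ to verify (snq).

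Your approach bypasses Proposition~\ref{theorelMandU} entirely: the elementary sandwich $t_p\le m^V_p\le t_{p+1}$ followed by $t_p=U(p/A(t_p))$ and the uniform regular variation of $U$ yields the precise asymptotic $m^V_p\sim(1/\ro)^{1/\ro}U(p)$, from which both (mg) and (snq) drop out simultaneously with $k=2$. This is more economical and gives a strictly stronger intermediate conclusion than the paper's two-sided bound on $m^V_p$. In fact, your limit $\lim_{p\to\oo}m^V_{kp}/m^V_p=k^{1/\ro}$ for every integer $k\ge 2$ is exactly condition~(d) of Theorem~\ref{theorem.charact.prox.order.nonzero}; hence you have shown that $d_{\M^V}$ is itself a nonzero proximate order, which is stronger than what the paper obtains later in Proposition~\ref{pro.suc.equiv.d.proxorder} (where only an \emph{equivalent} sequence with this property is produced).
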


 \begin{proof}
 We write again $(M_p)_{p\geq0}$ for the sequence $(M^V_p)_{p\geq0}$.
 In Proposition~\ref{MVlogconvex} we have seen that  $(M_p)_{p\geq0}$ is logarithmically convex and $M_0=1$.
 By Proposition~\ref{propPropiedlcmg}(ii.2), in order to prove (mg) it is enough to see that there exist $p_0\in\N$ and $A>1$ such that
 $$M_{2p}\leq A^{p} M^2_p, \qquad p\geq p_0.$$
 We observe that
 $$M_{2p}= \sup_{t>0} \frac{t^{2p}}{e^{V(t)}}\leq \sup_{t>0} \frac{t^{p}}{e^{V(t)/2}} \sup_{t>0} \frac{t^{p}}{e^{V(t)/2}}.$$
We consider the functions $\eta_p(t)=t^p e^{-V(t)/2}$, and we see that the points that verify $\eta'_p(t)=0$ are
 those satisfying
 $$p=\frac{V(t)}{2}A(t).$$
Proceeding as in Proposition~\ref{theorelMandU}, and with the notation used there, we see that $\eta_p(t)$ reaches its maximum in the point $t_{2p}\in(0,\oo)$, and given
$0<\ep<\ro$ there exists $p_\ep\in\N$ such that for $p>p_\ep$ we have
$$A(t_{2p})\in (\ro-\ep,\ro+\ep), \qquad t_{2p}= U\left(\frac{2p}{A(t_{2p})}\right),$$
where $U(r)$ is the inverse of $V(s)$. So, for $p>p_\ep$ we have
 $$M_{2p}\leq  \frac{U\left(2p/A(t_{2p})\right)^{p}}{e^{p/A(t_{2p})}}  \frac{U\left(2p/A(t_{2p})\right)^{p}}{e^{p/A(t_{2p})}}.$$
Since $U$ is increasing, we see that
  $$M_{2p}\leq  \frac{U\left(2p/(\ro-\ep)\right)^{2p}}{e^{2p/(\ro+\ep)}},
  \qquad p >p_\ep.$$
 Finally, using that
$$\lim_{s\ri\oo}\frac{U(2s/(\ro-\ep))}{U(s)}=\left(\frac{2}{(\ro-\ep)}\right)^{1/\ro},$$
 there exists $p_0>p_\ep$ such that for every $p\geq p_0$ we have
 $$M_{2p}\leq  \frac{U(p)^{2p}2^{2p}\left(2/(\ro-\ep)\right)^{2p/\ro}}{e^{(2p)/(\ro+\ep)}},$$
 and by~(\ref{equation.weak.UyM})
  $$M_{2p}\leq  \left(\frac{4^{1+1/\ro}B^2}{e^{2/(\ro+\ep)}(\ro-\ep)^{2/\ro}}\right)^{p}M^2_p, \qquad p >p_0.$$
 Consequently, $\M$ has moderate growth.

Let us see that $\M$ also verifies (snq).
By Proposition~\ref{propPropiedlcmg}, one has
  $$m_{p}\le A^{2}M_{p}^{1/p}\le A^{2}m_{p},$$
and we may apply~(\ref{equation.weak.UyM}) to deduce
  $$\left(\frac{1}{B}\right) U(p)\leq m_p\leq A^2 B  U(p), \qquad p\in\N.$$
 Now we choose $k\in\N$ large enough such that $k^{1/\ro}/(2AB)^2\geq 1$. By the regular variation of $U$,
there exists $p_1\in\N$ such that
 $$\frac{m_{kp}}{m_{p}} \geq \frac{U(kp)}{A^2 B^2 U(p)} \geq \frac{k^{1/\ro}}{ 2A^2B^2}\geq 2>1, \qquad p>p_1.$$
 Then $\M$ verifies that
 $$\liminf_{p\ri\oo} \frac{m_{kp}}{m_p}>1,$$
and by Proposition~\ref{propPropiedlcmg}(iii) we conclude.
 \end{proof}

  In particular, by this lemma we deduce that $\lim_{p\ri\oo}m^V_p=\oo$ for every function $V \in \Bet(\ga, \ro(t) )$, where $(m^V_p)_{p\geq0}$
  is the quotient sequence associated to $(M^V_p)_{p\geq0}$. So,
 the associated function to the sequence $(M^V_p)_{p\geq0}$,  $M(t)=\sup_{p\in\N} \log (t^p/M^V_p)$,  and the counting
 function, $\nu(t)=\#\{j:m_j^V\le t\}$, are defined in $(0,\oo)$.

 To see that this construction is consistent, we want to prove that $V(t)$ and the associated function
 to the sequence $(M^V_p)_{p\geq0}$ are equivalent, i.e., $\lim_{t\ri\oo} V(t)/M(t)=1$. We will need some of the basic properties of the Young conjugate of a convex function  (see \cite{niculescu}, \cite{rockafellar}).
First we consider the following generalized definition of a convex function.

 \begin{den}\label{defConvex}
  Let $I\en\R$ be an interval and $f:I\ri\overline{\R}$, where $\overline{\R}:=\R\cup\{-\oo,\oo\}$ (with the usual extension of the order relation to this set).
  We say that $f$ is \textit{convex} if
  $$f((1-\lambda)x+\lambda y)\leq (1-\lambda)\a+\lambda \b $$
  whenever $f(x)<\a$ and $f(y)<\b$, for every $x,y\in I$ and every $\lambda\in(0,1)$. A function $g:I\ri\overline{\R}$ is said to be \textit{concave} if the function $-g$ is convex
  (we consider $-(\oo):=-\oo$ and $-(-\oo):=\oo$).
 \end{den}

These definitions agree with the classical ones when we consider $f,g:I\en\R\ri\R$.

 \begin{den}
  Let $f:\R\ri\overline{\R}$ be a convex function, the \textit{Young conjugate of $f$} is defined by
  $$f^*(y):=\sup_{x\in\R} (xy-f(x)), \qquad y\in\R.$$
 \end{den}

The following result, which may be deduced from more sophisticated ones in the book of R. T. Rockafellar~\cite[Ch.\ 7,\ 12]{rockafellar}, will be used in the next arguments.

 \begin{prop}\label{propdualconvexidadConjugadaYoung}
Let $f:\R\ri\R$ be a convex function. Then,  $f^*$ is convex and one has $(f^*)^*=f$.
\end{prop}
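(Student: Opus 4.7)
The plan is to prove the two assertions separately, following the classical Fenchel--Moreau argument; the paper already points to~\cite[Ch.\ 7,\ 12]{rockafellar} for the general theory, so I only sketch it to the level needed here.

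For convexity of $f^*$, I would observe that for each fixed $x\in\R$ the map $y\mapsto xy-f(x)$ is affine, hence convex in the sense of Definition~\ref{defConvex}, and that $f^*$ is the pointwise supremum of this family. A short direct verification, unwinding Definition~\ref{defConvex}, shows that a pointwise supremum of convex functions is again convex in this generalised sense: if $f^*(y_1)<\a_1$ and $f^*(y_2)<\a_2$, then $xy_i-f(x)<\a_i$ for every $x$, so $x((1-\lambda)y_1+\lambda y_2)-f(x)<(1-\lambda)\a_1+\lambda\a_2$, and taking the supremum over $x$ yields the required bound on $f^*((1-\lambda)y_1+\lambda y_2)$.

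For the identity $(f^*)^*=f$, the easy half $(f^*)^*\le f$ is immediate from the Young inequality $xy\le f(x)+f^*(y)$, which is a tautology built into the definition of $f^*$. For the reverse inequality I would fix $x_0\in\R$ and $\ep>0$ and invoke a supporting hyperplane argument: since $f:\R\ri\R$ is a real-valued convex function on an open interval it is automatically continuous, so its epigraph is a closed convex subset of $\R^2$ missing the point $(x_0,f(x_0)-\ep)$. Hahn--Banach then yields $\a,\b,c\in\R$ with $\a x+\b t\le c$ on the epigraph and $\a x_0+\b(f(x_0)-\ep)>c$. Because the epigraph is unbounded upwards in $t$ for each fixed $x$, one necessarily has $\b<0$; normalising to $\b=-1$ and setting $y_0:=\a$ gives $y_0 x-f(x)\le c$ for all $x$, whence $f^*(y_0)\le c$, and $y_0 x_0-c>f(x_0)-\ep$. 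Therefore $(f^*)^*(x_0)\ge y_0 x_0-f^*(y_0)>f(x_0)-\ep$, and $\ep\to 0$ concludes.

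The main technical point is verifying that the separating hyperplane is not vertical, which relies on the unboundedness of the epigraph in the $t$-direction; this is where the hypothesis that $f$ is real-valued (not just proper) is convenient. A minor bookkeeping issue is that $f^*$ may a priori take the value $+\oo$, so its Young conjugate must be read in the generalised sense of Definition~\ref{defConvex}; the argument above proceeds identically, and the final equality $(f^*)^*=f$ shows a posteriori that $(f^*)^*$ is in fact finite everywhere.
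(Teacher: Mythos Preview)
The paper does not actually prove this proposition: it merely states that the result ``may be deduced from more sophisticated ones in the book of R.~T.~Rockafellar~\cite[Ch.\ 7,\ 12]{rockafellar}'' and moves on. Your write-up therefore supplies what the paper omits, and it is the standard Fenchel--Moreau argument: convexity of $f^*$ as a supremum of affine functions, the easy inequality $(f^*)^*\le f$ from Young's inequality, and the reverse inequality via separation of the closed epigraph from an exterior point. All of this is correct.

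One small imprecision worth tightening: the sentence ``Because the epigraph is unbounded upwards in $t$ for each fixed $x$, one necessarily has $\b<0$'' only yields $\b\le 0$ directly. To exclude $\b=0$ you also need that the epigraph of a real-valued $f$ on $\R$ projects onto all of $\R$, so that a vertical separating line is impossible; you allude to this in your closing remark about $f$ being real-valued, but it would be cleaner to make the step explicit at the point where it is used.
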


Let $\ro(t) \ri \ro>0$ be a proximate order,  $\ga>0$ and $V \in \Bet(\ga, \ro(t) )$. By property (IV) in Theorem~\ref{theorpropanalproxorde}
we know that the function $\fy_V\colon \R \ri\R$ defined by $\fy_V(x)=V(e^x)$ is strictly convex. So we can consider its Young conjugate
$$\fy^*_V(y)=\sup_{x\in\R} (xy-\fy_V(x)) = \sup_{x\in\R} (xy-V(e^x))= \sup_{t>0}(y \log(t)-V(t)).$$
We observe that
\begin{equation}\label{equaMpVExpYoungConj}
M^V_p=\sup_{t>0} \frac{t^p}{e^{V(t)}} = \exp \big(\sup_{t> 0}( p \log(t)-V(t))\big) =\exp{\fy^*_V(p)}, \qquad p\in\N_0.
\end{equation}
Moreover, since $\fy_V$ is continuous, Proposition~\ref{propdualconvexidadConjugadaYoung} implies that
\begin{equation}\label{equaDoubleYoungConj}
\fy^{**}_V:=(\fy^*_V)^*=\fy_V.
\end{equation}

Please note that, although the function $\fy_V$ only assumes real values, the function $\fy^*_V$ does take the value $+\infty$ in $(-\infty,0)$, and so we need to consider the extended Definition~\ref{defConvex} of a convex function.

 The following theorem relates $V(t)$ and $M(t)$ through the Young conjugate. The proof is based on a result from
 the PhD thesis of the third author (see \cite[Theorem\ 4.0.3]{schindlthesis}).

 \begin{theo}\label{theo.VequivM}
  Let $\ro(t) \ri \ro>0$ be a proximate order,  $\ga>0$ and $V \in \Bet(\ga, \ro(t) )$. If $M(t)$ is the associated function to the sequence $(M^V_p)_{p\geq0}$,
  then
  $$\lim_{t\ri\oo} \frac{V(t)}{M(t)}=1.$$
\end{theo}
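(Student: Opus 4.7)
My plan is to exploit the Young-conjugate identity underlying the definition of $(M_p^V)$ together with the regular variation coming from $V\in\Bet(\ga,\ro(t))$. Setting $\fy_V(x)=V(e^x)$, the identity~\eqref{equaMpVExpYoungConj} reads $\log M_p^V=\fy_V^*(p)$, and since $\fy_V$ is continuous and strictly convex by property (IV), biconjugation gives $\fy_V=\fy_V^{**}$, so that
\begin{align*}
V(t)&=\fy_V(\log t)=\sup_{y\in\R}\bigl(y\log t-\fy_V^*(y)\bigr),\\
M(t)&=\sup_{p\in\N_0}\bigl(p\log t-\fy_V^*(p)\bigr).
\end{align*}
Comparing the two suprema, $M(t)\le V(t)$ is immediate, which gives half of the claim for free: $V(t)/M(t)\ge 1$.

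The next step will be to show that equality $M(t_p)=V(t_p)$ holds exactly at each of the maximizers $t_p$ from Lemma~\ref{lemmafuncg}. Indeed, since the supremum defining $M_p^V$ is attained at $t_p$ (with $V(t_p)A(t_p)=p$), we have $\log M_p^V=p\log t_p-V(t_p)$, whence
$$V(t_p)=p\log t_p-\log M_p^V\le M(t_p)\le V(t_p),$$
forcing equality at every $t_p$. Beyond these special points, the crude one-line bound
$$M(t)\ge p\log t-\log M_p^V=V(t_p)+p\log(t/t_p)$$
holds for every $t>0$ and every $p\in\N_0$.

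For arbitrary large $t$, I will choose $p=p(t)$ with $t\in[t_p,t_{p+1}]$ and combine the two bounds, using $p=V(t_p)A(t_p)$, to obtain
$$0\le V(t)-M(t)\le V(t)-V(t_p)-p\log(t/t_p)=V(t_p)\Bigl[\tfrac{V(t)}{V(t_p)}-1-A(t_p)\log(t/t_p)\Bigr].$$
The goal is then to prove that the bracketed expression is $o(1)$ as $p\to\oo$. This relies on two facts: first, $t_{p+1}/t_p\to 1$, which follows from the representation $t_p=U(p/A(t_p))$ (with $U=V^{-1}$ regularly varying of index $1/\ro$ by Remark~\ref{remaInverseRegularlyVarying}) combined with $A(t_p)\to\ro$ from Lemma~\ref{prop.properties.V.A}; second, the regular variation of $V$ itself (Proposition~\ref{propRegVarOrdAprox}), yielding $V(t)/V(t_p)\to 1$ uniformly for $t\in[t_p,t_{p+1}]$. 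Since $A(t_p)\to\ro$ is bounded and $\log(t/t_p)\to 0$, the bracket tends to $0$; dividing through by $V(t)$ and using $V(t_p)/V(t)\to 1$ then gives $(V(t)-M(t))/V(t)\to 0$, i.e.\ $\lim_{t\ri\oo}V(t)/M(t)=1$.

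The main obstacle will be controlling $t_{p+1}/t_p$, which amounts to a quotient of the form $U(\lambda_p x_p)/U(x_p)$ where both $\lambda_p\to 1$ and $x_p=p/A(t_p)\to\oo$; this requires combining the uniformity (on compact subsets of $(0,\oo)$) of the convergence in the definition of regular variation with the implicit convergence $A(t_p)\to\ro$. Once this ratio is pinned down, the rest of the argument is the straightforward arithmetic of the two-sided bound above.
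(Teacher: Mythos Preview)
Your argument is correct and complete, but it proceeds along a genuinely different route from the paper's proof. Both begin identically: biconjugation yields $V(t)=\sup_{y\in\R}(y\log t-\fy_V^*(y))$, whence $M(t)\le V(t)$. For the reverse direction, however, the paper works on the \emph{conjugate} side: it fixes $t$, observes that $y\mapsto f_t(y):=y\log t-\fy_V^*(y)$ is concave, and by a short geometric case analysis shows that its continuous maximum (equal to $V(t)$) lies in $(\nu(t)-1,\nu(t)+1)$ and is bounded above by $f_t(\nu(t))\cdot\nu(t)/(\nu(t)-1)=M(t)\cdot\nu(t)/(\nu(t)-1)$. Since $\nu(t)\to\oo$, the ratio $V(t)/M(t)$ is squeezed to $1$.

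You instead work on the \emph{primal} side: you exploit the convexity of $\fy_V$ through its tangent line at $\log t_p$ (which is exactly the lower bound $M(t)\ge V(t_p)+p\log(t/t_p)$) and then control the resulting gap $V(t)-V(t_p)-p\log(t/t_p)$ by means of the regular variation of $V$ and of its inverse $U$, together with $A(t_p)\to\ro$, to force $t_{p+1}/t_p\to 1$. The paper's argument is shorter and yields the explicit, self-contained bound $V(t)/M(t)\le\nu(t)/(\nu(t)-1)$ without ever invoking regular variation or the implicit relation $t_p=U(p/A(t_p))$; your approach, by contrast, avoids the geometric casework on where the concave maximum sits, at the cost of importing more machinery (Proposition~\ref{propRegVarOrdAprox}, Remark~\ref{remaInverseRegularlyVarying}, Lemma~\ref{prop.properties.V.A}) already developed in the paper for other purposes.
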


\begin{proof}
  For simplicity, we note $(M_p)_{p\geq0}$ for the sequence $(M^V_p)_{p\geq0}$. We observe that, using~\eqref{equaMpVExpYoungConj} and~\eqref{equaDoubleYoungConj},
\begin{align}
M(t)&=\sup_{p\in\N_0} \log \left(\frac{t^p}{M_p}\right)= \sup_{p\in\N_0}  \left(p \log(t) - \fy^*_V(p)  \right) \no\\
  &\leq
  \sup_{y\in\R}\left(y \log(t) - \fy^*_V(y)  \right)= \fy^{**}_V(\log(t))= \fy_V(\log(t))=V(t).\label{desMyV1}
\end{align}
 Let us show that
\begin{equation*}
 \frac{\sup_{y\in\R} \left(y \log(t) - \fy^*_V(y)  \right) }{\sup_{p\in\N_0} \left(p \log(t) - \fy^*_V(p)  \right)}
 =\frac{V(t)}{M(t)} \leq \frac{\nu(t)}{\nu(t)-1}, \qquad  t>m_1
 \end{equation*}
(observe that for $t>m_1$, we have that $\nu(t)\geq2$ and $M(t)>0$). For $t>m_1$ we define the function
 $$f_t(y):= y \log(t)-\fy^*_V(y), \qquad y\in\R.$$
 Since $\fy^*_V(x)$ is convex (by Proposition~\ref{propdualconvexidadConjugadaYoung}),
 we have that $f_t(y)$ is a concave function.
 We see that $\fy^*_V(y)=\oo$ for $y<0$, because
 $$xy-V(e^x)<0\quad\textrm{for }x\geq 0,\qquad \textrm{and }\quad xy-V(e^x)\geq xy-V(1)\quad\textrm{for }x<0.
 $$
 Consequently, $f_t(y)=-\oo$ if $y<0$. As $\fy^*_V(0)=\log M_0=0$, we have $f_t(0)=0$ for every $t>m_1$. Then
 $\sup_{y\in\R}f_t(y)$, which equals $V(t)$, is attained for some $y>0$.

 Let us see that this happens in the interval $(\nu(t)-1,\nu(t)+1)$.
 Otherwise,
 there are two possibilities: there exists $x\geq\nu(t)+1$,  respectively $x\leq\nu(t)-1$, such that $f_t(x)>f_t(\nu(t))$. Since $f_t$ is concave, all the points $(y,f_t(y))$ such that $y\in[\nu(t),x]$, resp. $y\in[x,\nu(t)]$, have to lie above the line which
 connects $(\nu(t), f_t(\nu(t)))$ and $(x,f_t(x))$. In particular, $f_t(\nu(t)+1) > f_t(\nu(t))$,
 resp. $f_t(\nu(t)-1) > f_t(\nu(t))$, and this is impossible because, by virtue of~\eqref{equaRelationM_nu}, we have
 $$f_t(\nu(t))=M(t)=  \sup_{p\in\N_0} \left(p \log(t) - \fy^*_V(p)  \right)\geq f_t(p), \quad p\in\N_0.$$
 Therefore, there exists $y_t\in(\nu(t)-1,\nu(t)+1)$ such that
 $$f_t(y_t)=\sup_{y\in\R} f_t(y)=V(t).$$
 We distinguish three cases: $y_t=\nu(t)$, $y_t\in(\nu(t)-1,\nu(t))$ or $y_t\in(\nu(t),\nu(t)+1)$. First, if $y_t=\nu(t)$ we have that
 $$\frac{V(t)}{M(t)} =\frac{f_t(y_t)}{f_t(\nu(t))} =1 \leq \frac{\nu(t)}{\nu(t)-1}.$$
  Secondly, if we assume that $y_t\in (\nu(t),\nu(t)+1)$, we consider the connecting line between $(0,f_t(0))=(0,0)$ and $(\nu(t),f_t(\nu(t)))$, which is given
 by the equation $z=(f_t(\nu(t))/\nu(t))w$. Let us see that every point $(y,f_t(y))$ with $y\in(\nu(t),\nu(t)+1)$ has to lie below this line: Otherwise, we would have
 $f_t(y)>(f_t(\nu(t))/\nu(t))y$ for some $y\in(\nu(t),\nu(t)+1)$. This would imply that $(f_t(y)/y)\nu(t)>f_t(\nu(t))$, which is a contradiction
 to the concavity of $f_t$, since it means that the point $(\nu(t),f_t(\nu(t)))$ lies below the line with equation $z=(f_t(y)/y)w$, which joins $(0,0)$ and $(y,f_t(y))$.

 In particular, as $y_t\in (\nu(t),\nu(t)+1)$, we have seen that
 $$\frac{V(t)}{M(t)} =\frac{f_t(y_t)}{f_t(\nu(t))}\leq \frac{(f_t(\nu(t))/\nu(t)) y_t}{f_t(\nu(t))}
 \leq \frac{f_t(\nu(t))((\nu(t)+1)/\nu(t))}{f_t(\nu(t))}\leq \frac{\nu(t)+1}{\nu(t)}\leq \frac{\nu(t)}{\nu(t)-1}.$$
  Thirdly, if $y_t\in (\nu(t)-1,\nu(t))$, we consider now the connecting line between $(0,0)$ and $(\nu(t)-1, f_t(\nu(t)-1))$, which is given by the
 equation $z=(f_t(\nu(t)-1)/(\nu(t)-1))w$. Reasoning as before, the point $(y_t,f_t(y_t))$ lies below this line. Hence we have shown that
 $$f_t(y_t)\leq \frac{f_t(\nu(t)-1)}{\nu(t)-1} y_t \leq f_t(\nu(t)-1) \frac{\nu(t)}{\nu(t)-1} \leq f_t(\nu(t)) \frac{\nu(t)}{\nu(t)-1}.$$
Consequently,
 $$\frac{V(t)}{M(t)} =\frac{f_t(y_t)}{f_t(\nu(t))} \leq \frac{\nu(t)}{\nu(t)-1}.$$
 Using~(\ref{desMyV1}) and the previous information  we see that
 $$1\leq\frac{V(t)}{M(t)}\leq \frac{\nu(t)}{\nu(t)-1}, \quad t>m_1.$$
Since $\nu(t)\ri\oo$ as $t\to\oo$, we conclude that
 $$\lim_{t\ri\oo} \frac{V(t)}{M(t)}=1.$$
\end{proof}

It is clear from the last theorem that the sequence $(M^V_p)_{p\in\N_0}$ admits a nonzero proximate order in the sense
 of Definition~\ref{def.admit.p.o}, since $\ro_V$ is a proximate order and
 $$
 \lim_{t\to\oo}\log(t)\big(\ro_V(t)-d_{\M}(t)\big)=0.
 $$
 But the function
 $d_{\M}(t)$  is not necessarily a proximate order itself. However, the next result
   shows that we can construct a sequence $\L$ equivalent to $(M^V_p)_{p\in\N_0}$ such that $d_{\L}$ is a nonzero proximate order.

\begin{prop}\label{pro.suc.equiv.d.proxorder}
   Let $\ro(t) \ri \ro>0$ be a proximate order,  $\ga>0$ and $V \in \Bet(\ga, \ro(t) )$. There exists a (lc) sequence $\L$, with quotients tending to infinity, such that $\L\approx(M^V_p)_{p\in\N_0}$ and $d_{\L}(t)$ is a nonzero proximate order.
\end{prop}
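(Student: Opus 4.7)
The plan is to bypass $d_{\M^V}$ (whose failure to be a proximate order comes from the oscillatory way the supremum defining $M^V_p$ selects integer exponents) by hand-crafting a sequence $\L$ whose quotients are the values $U(p)$ of the inverse function $U$ of $V$, and then to exploit the fact that $U$ is regularly varying. Concretely, let $U:(V(R),\oo)\ri(R,\oo)$ be the inverse of $V$ from Remark~\ref{remaInverseRegularlyVarying}; by Proposition~\ref{theorelMandU} we may fix $p_0\in\N$ large enough that $U(p)$ is defined and the two-sided bound there holds for $p\ge p_0$. Define
$$\ell_p:=U(p_0)\ \text{for}\ 0\le p<p_0, \qquad \ell_p:=U(p)\ \text{for}\ p\ge p_0,$$
and set $L_0:=1$, $L_p:=\ell_0\ell_1\cdots\ell_{p-1}$ for $p\ge 1$. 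Since $U$ is strictly increasing and $U(s)\ri\oo$ as $s\ri\oo$, the sequence $\bl$ is nondecreasing and tends to infinity, so by Proposition~\ref{propPropiedlcmg}(i) the sequence $\L$ is (lc) with $\bl\ri\oo$.

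Next I would check $\L\approx\M^V$. By Theorem~\ref{theorem.construct.sequences.from.prox.order}, $\M^V$ is strongly regular, so Proposition~\ref{propPropiedlcmg}(ii.1) and (ii.3) give $(M^V_p)^{1/p}\simeq m^V_p$. Proposition~\ref{theorelMandU} provides $(1/B)U(p)\le (M^V_p)^{1/p}\le B\,U(p)$ for $p\in\N$, so $(M^V_p)^{1/p}\simeq U(p)\simeq\ell_p$ (the finitely many initial terms are absorbed into the constants). Chaining equivalences, $\bl\simeq\bm^V$, and Proposition~\ref{propRelacionOrdenes}(i) then yields $\L\approx\M^V$.

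The main step, and the place I expect to do the real work, is to show that $d_{\L}$ is a nonzero proximate order, for which by Theorem~\ref{theorem.charact.prox.order.nonzero} it suffices to prove that $\bl$ is regularly varying with positive index. Remark~\ref{remaInverseRegularlyVarying} tells us that $U$ is regularly varying as a \emph{function} with index $1/\ro>0$, i.e.\ $U(tr)/U(r)\ri t^{1/\ro}$ uniformly on compact subsets of $(0,\oo)$. For any fixed $\lambda>0$ and $p$ large enough that $p,\lfloor\lambda p\rfloor\ge p_0$,
$$\frac{\ell_{\lfloor\lambda p\rfloor}}{\ell_p}=\frac{U(\lfloor\lambda p\rfloor)}{U(\lambda p)}\cdot\frac{U(\lambda p)}{U(p)}.$$
The second factor tends to $\lambda^{1/\ro}$. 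Writing $\lfloor\lambda p\rfloor=t_p\cdot\lambda p$ with $t_p:=\lfloor\lambda p\rfloor/(\lambda p)\ri 1$, the uniform convergence on compact neighbourhoods of $1$ forces the first factor to converge to $1^{1/\ro}=1$. Hence $\ell_{\lfloor\lambda p\rfloor}/\ell_p\ri\lambda^{1/\ro}$ for every $\lambda>0$, so $\bl$ is regularly varying with index $1/\ro>0$. The implication (c)$\Rightarrow$(a) of Theorem~\ref{theorem.charact.prox.order.nonzero} concludes that $d_{\L}$ is a proximate order with $\lim_{t\ri\oo}d_{\L}(t)=\ro\in(0,\oo)$, finishing the proof. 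The subtle point to handle cleanly is precisely the passage from regular variation of the \emph{function} $U$ to regular variation of the integer-indexed \emph{sequence} $\bl$, which is exactly where uniform-on-compacts convergence is needed.
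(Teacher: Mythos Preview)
Your proof is correct and follows essentially the same route as the paper: define $\ell_p$ via the inverse function $U$ of $V$, use Proposition~\ref{theorelMandU} together with the (mg) relation $m^V_p\simeq (M^V_p)^{1/p}$ to obtain $\bl\simeq\bm^V$ and hence $\L\approx\M^V$, and then invoke regular variation of $U$ to apply Theorem~\ref{theorem.charact.prox.order.nonzero}. The only noteworthy difference is that the paper checks condition~(d) of that theorem (the limit $\ell_{kp}/\ell_p\to k^{1/\ro}$ for \emph{integers} $k\ge 2$, where the floor is trivial), whereas you verify the full condition~(c) for all $\lambda>0$ and therefore need the extra uniform-on-compacts argument to handle $\lfloor\lambda p\rfloor$; both work, the paper's choice is just slightly more economical.
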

\begin{proof}
We write $M_p$ instead of $M_p^V$, for short. By Theorem~\ref{theorem.construct.sequences.from.prox.order} and Proposition~\ref{propPropiedlcmg} there exists $A>0$ such that
  $$m_{p}\le A^{2}M_{p}^{1/p}\le A^{2}m_{p},$$
and applying~(\ref{equation.weak.UyM}) we have
 \begin{equation}\label{strong.equiv.MandU}
  \frac{1}{B} U(p)\leq m_p\leq A^2 B  U(p), \qquad p\in\N.
 \end{equation}
 We consider the sequence $\L=(L_p)_{p\in\N_0}$ given by the sequence of quotients $\bl=(\ell_p)_{p\in\N_0}$, with   $\ell_0:=U(1)$, $\ell_p:=U(p)$ for every $p\in\N$. Then
 $$L_0=1,\qquad L_p=U(1)\prod_{k=1}^{p-1} U(k), \quad p\in\N.$$
 Using~(\ref{strong.equiv.MandU}), we see that $\bl\simeq\bm$, and so $\L \approx \M$ by Proposition~\ref{propRelacionOrdenes}. Since the function $U(s)$ is increasing to infinity in $(0,\oo)$, the sequence of quotients $(\ell_p)_{p\in\N_0}$ also is, and $\L$ is (lc).
By the regular variation of the function $U(s)$, we see that
 the sequence $(\ell_p)_{p\in\N_0}$ is regularly varying with index $1/\ro$, since
 $$\lim_{p\ri\oo} \frac{\ell_{k p}}{\ell_p} = \lim_{p\ri\oo} \frac{U(kp)}{U(p)}=k^{1/\ro}, \qquad k \geq2.$$
By Theorem~\ref{theorem.charact.prox.order.nonzero}, $d_{\L}$ is a nonzero proximate order.
 \end{proof}

\subsection{Characterization of the admissibility condition}

As a consequence of the results in the previous subsection, we can show that the weaker conditions that are sufficient for the construction of flat functions in optimal sectors are indeed the same.

\begin{theo}\label{theo.charact.admit.p.o}
Let $\M$ be (lc) and $\lim_{p\ri\oo} m_p=\oo$, then the following conditions are equivalent:
\begin{enumerate}
 \item[(e)] There exists a (lc) sequence $\L$, with quotients tending to infinity, such that $\L\approx\M$ and $d_{\L}(t)$ is a nonzero proximate order.
 \item[(f)] $\M$ admits a nonzero proximate order.
\end{enumerate}
\end{theo}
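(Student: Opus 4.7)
The plan is to prove the two implications separately, using the construction from the preceding subsection as the bridge from nonzero proximate orders to sequences.

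For (e) $\Rightarrow$ (f), I would exploit the regular variation of the associated function $L(t)=t^{d_{\L}(t)}$ of $\L$, which follows from Proposition~\ref{propRegVarOrdAprox} since $d_{\L}$ is a nonzero proximate order converging to some $\ro:=\lim_{t\to\infty}d_{\L}(t)>0$. The sequence inequalities $D^{p}L_{p}\le M_{p}\le C^{p}L_{p}$ translate, via the sup representation of associated functions, into $L(t/C)\le M(t)\le L(t/D)$ for $t$ large. Dividing by $L(t)$ and using $L(t/C)/L(t)\to C^{-\ro}$ and $L(t/D)/L(t)\to D^{-\ro}$, I get that $M(t)/L(t)$ stays between two positive constants for $t$ large, i.e.\ $\log(t)(d_{\L}(t)-d_{\M}(t))$ is bounded. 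Setting $\ro(t):=d_{\L}(t)$ provides a nonzero proximate order verifying the admissibility condition~(\ref{eqAdmitsProximateOrder}) for $\M$.

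For (f) $\Rightarrow$ (e), let $\ro$ be a nonzero proximate order admissible for $\M$. Maergoiz's Theorem~\ref{theorpropanalproxorde} provides some $V\in\Bet(\ga,\ro(t))$; by Theorem~\ref{theorem.construct.sequences.from.prox.order} the sequence $(M^{V}_{p})$ is strongly regular, and Proposition~\ref{pro.suc.equiv.d.proxorder} furnishes a (lc) sequence $\L$ with quotients $\ell_{p}=U(p)$ ($p\ge 1$, where $U=V^{-1}$) satisfying $\L\approx(M^{V}_{p})$ and such that $d_{\L}$ is a nonzero proximate order. It remains to verify $\L\approx\M$; since $\L$ is strongly regular and $\M$ is (lc), Proposition~\ref{propRelacionOrdenes}(ii) makes this equivalent to $\bl\simeq\bm$, i.e.\ to $m_{p}\asymp U(p)$. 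Combining the admissibility of $\ro$ with the equivalence between $\ro$ and $\ro_{V}$ (Theorem~\ref{theorpropanalproxorde}(VI)), the ratio $V(t)/M(t)$ lies in a fixed interval $[\a,\b]\subset(0,\oo)$ for $t$ large. Evaluating at $t=m_{p}$, using $M(m_{p})=p\b_{p}$ from~(\ref{equation.M.in.mp}) with $\b_{p}=\log(m_{p}/M_{p}^{1/p})$, and applying the strictly increasing $U$, I obtain $U(\a p\b_{p})\le m_{p}\le U(\b p\b_{p})$ for $p$ large.

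The main obstacle is showing that $(\b_{p})_{p}$ is bounded both above and away from zero. Once this is done, the regular variation of $U$ with index $1/\ro$ (Remark~\ref{remaInverseRegularlyVarying}), being uniform on compact subsets of $(0,\oo)$, forces $U(\a p\b_{p})/U(p)$ and $U(\b p\b_{p})/U(p)$ to stay pinched between positive constants, whence $m_{p}\asymp U(p)$ as required. To bound $\b_{p}$ I would use the convexity of $M$ in the variable $\log t$ (its right derivative being the non-decreasing function $\nu(t)$) together with $V\asymp M$ and $V(Kt)/V(t)\to K^{\ro}$. The left-slope inequality $M(2m_{p})-M(m_{p})\ge\nu(m_{p})\log 2=(p+1)\log 2$ confronted with $M(2m_{p})\le(\b/\a)2^{\ro}M(m_{p})(1+o(1))$, coming from $V\asymp M$, forces a positive lower bound for $\b_{p}$ for $p$ large. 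Then, picking $K$ sufficiently large, the right-slope inequality $M(Km_{p})-M(m_{p})\le\nu(Km_{p})\log K$ combined with $M(Km_{p})\ge(\a/\b)K^{\ro}M(m_{p})(1-o(1))$ and the lower bound on $\b_{p}$ delivers $\nu(Km_{p})\ge 2p+1$ for $p$ large, i.e.\ $m_{2p}\le Km_{p}$; hence $\M$ satisfies (mg) by Proposition~\ref{propPropiedlcmg}(ii.2.c), and Proposition~\ref{propPropiedlcmg}(ii.2.b) then provides the upper bound $\b_{p}\le 2\log A$.
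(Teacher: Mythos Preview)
Your argument for (e)$\Rightarrow$(f) is essentially the paper's proof: translate $\M\approx\L$ into $L(t/C)\le M(t)\le L(t/D)$, divide by $L(t)$, and appeal to the regular variation of $L(t)=t^{d_{\L}(t)}$.

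For (f)$\Rightarrow$(e) your strategy is correct but substantially more circuitous than the paper's. The paper proceeds as follows: from admissibility and the equivalence $\ro_V\sim\ro$ one gets $C\le M(t)/V(t)\le D$ for large $t$, and then the regular variation of $V$ upgrades this to $V(Et)\le M(t)\le V(Ft)$. At this point the paper exploits the \emph{dual sup representation}
\[
M_p=\sup_{t>0}\frac{t^{p}}{e^{M(t)}},\qquad M^{V}_p=\sup_{t>0}\frac{t^{p}}{e^{V(t)}},
\]
valid because $\M$ is (lc); the sandwich $V(Et)\le M(t)\le V(Ft)$ then immediately yields $M^{V}_p/F^{p}\le M_p\le M^{V}_p/E^{p}$, i.e.\ $\M\approx(M^{V}_p)$, and one concludes via Proposition~\ref{pro.suc.equiv.d.proxorder}. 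No control of $\beta_p$ is needed, and (mg) for $\M$ falls out a posteriori from the equivalence with a strongly regular sequence.

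Your route instead aims directly at $m_p\asymp U(p)$: you evaluate $V\asymp M$ at $t=m_p$, reduce to bounding $\beta_p$ two-sidedly, and establish this via convexity estimates for $M$ (the slope inequalities with $\nu$) combined with $V\asymp M$ and the growth $V(Kt)/V(t)\to K^{\rho}$. This works --- the lower bound on $\beta_p$ is straightforward, and the upper bound via the detour through (mg) (Proposition~\ref{propPropiedlcmg}(ii.2.c)$\Rightarrow$(ii.2.b)) is valid once you choose $K$ with $(\alpha/\beta)K^{\rho}c/\log K$ large enough. The uniform regular variation of $U$ then closes the argument. So your proof is sound; it simply forgoes the clean Komatsu--Mandelbrojt duality that allows the paper to pass from function inequalities back to sequence equivalence in one line. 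A minor remark: to deduce $\L\approx\M$ from $\bl\simeq\bm$ you only need Proposition~\ref{propRelacionOrdenes}(i), not (ii).
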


\begin{proof}
 (e) $\Rightarrow$ (f)  If $M(t)$ and $L(t)$ are the associated functions to the sequences $\M$ and $\L$, as
 $\L\approx\M$, there exist positive constants $A$ and $B$  such that for every $t\in(0,\oo)$ one has
 $$L(At)\leq M(t)\leq L(Bt).$$
 Since $d_{\L}(t)$ is a nonzero proximate order, $L(t)=t^{d_{\L}(t)}$ is regularly varying by Proposition~\ref{propRegVarOrdAprox}, and we deduce that
 there exist positive constants $C$ and $D$ such that
 $$C \leq \frac{M(t)}{L(t)}\leq D \qquad \textrm{for $t$ large enough}.$$
 Finally, taking logarithms, we conclude that $\M$ admits a nonzero proximate order.

 (f) $\Rightarrow$ (e) Let $\ro(t)$ be the proximate order that $\M$ admits. There exist $A,B\in\R$ such that
 $$
 A\le \log(t)\big(d_{\M}(t)-\ro(t)\big)\le B\quad\textrm{for $t$ large enough}
 $$
or, in other words, there exist positive constants $A_0$ and $B_0$ such that
\begin{equation}\label{equaMboundedbytrot}
A_0\le \frac{M(t)}{t^{\ro(t)}}\le B_0\quad\textrm{for $t$ large enough}.
\end{equation}
Given $\ga>0$ we take a function $V \in \Bet(\ga, \ro(t))$. Since $\ro_V(t)=\log(V(t))/\log(t)$ and $\ro(t)$ are equivalent proximate orders, we know that
\begin{equation}\label{equaVequivtrot}
\lim_{t\to\oo}\frac{V(t)}{t^{\ro(t)}}=1,
\end{equation}
and from \eqref{equaMboundedbytrot} and \eqref{equaVequivtrot} we conclude that there exist positive constants $C$ and $D$ such that
 $$C\leq \frac{M(t)}{V(t)}\leq D, \qquad t\textrm{ large enough}.$$
 Using the regular variation of $V$, it is easy to show that there exist positive constants $E,F$
such that
\begin{equation}\label{equation.MyV.constantes}
 V(Et) \leq M(t)\leq V(Ft), \qquad t\textrm{ large enough}.
\end{equation}
Now, observe that, by the property (lc) of $\M$ and the very definition of the sequence $(M^V_p)_{p\in\N_0}$, we have that
 $$ M_p =\sup_{t>0} \frac{t^p}{e^{M(t)}}, \qquad M^V_p=\sup_{t>0} \frac{t^p}{e^{V(t)}},   \qquad p\in\N_0.$$
Then, (\ref{equation.MyV.constantes}) shows that there exists $p_0\in\N_0$ such that
 $$\frac{M^V_p}{F^p}  \leq M_p\leq \frac{M^V_p}{E^p}, \qquad p\geq p_0,$$
and we deduce that $\M\approx(M^V_p)_{p\in\N_0}$.
Finally, by Proposition~\ref{pro.suc.equiv.d.proxorder} we know that there exists a (lc) sequence $\L$, with quotients tending to infinity, such that $\L\approx(M^V_p)_{p\geq0}$ and $d_{\L}(t)$ is a nonzero proximate order.  Since we obviously have that $\L\approx\M$, we may conclude.
\end{proof}

\begin{rema}
The implication (a) $\Rightarrow$ (e) (see Theorems~\ref{theorem.charact.prox.order.nonzero} and~\ref{theo.charact.admit.p.o}) is obvious, while Example~\ref{example.dM.not.p.o} shows that the converse fails.\par
It is also clear that (e) $\Rightarrow$ (g), by Corollary~\ref{coroConseqRegularVariation} and since the existence, and the value if it exists, of the limit in~(\ref{limit.logmp.over.logp.omega}) are stable under equivalence, and the same is true for the value of the indices $\ga(\M)$ and $\o(\M)$.
Again, the converse implication (g) $\Rightarrow$ (e) fails, as the next Example~\ref{exampleGdoesnotimplyE} shows.\par
So, (a) $\Rightarrow$ (e) $\Rightarrow$ (g), and the arrows cannot be reversed. We think it is an interesting open problem to look for some easy-to-check condition (h) on the sequence $\M$ such that one has that (e) $\Leftrightarrow$ [(g)+(h)]. We note that the condition (h) should be stable by equivalence of sequences.

\end{rema}

\begin{exam}\label{exampleGdoesnotimplyE}
Let $\M$ be defined using the sequence of  quotients
$(m_p)_{p\in \N_0}$. The construction is similar as the one  given in Example~\ref{example.dM.not.p.o}.
We set $m_0=m_1=1$ and $m_2=2$. For each $k\in\N_0$, we consider the interval $(2^{2^k}, 2^{2^{k+1}}]$ which we divide in $2^k$
 subintervals. We put $I^k_j:=(2^{2^k+j}, 2^{2^k+j+1}]$ for $0\le j\le 2^k-1$.
For $0\le j\le k-1$  we
define $m_p$ as follows:
$$m_p:= 4 m_{2^{2^k+j}}, \qquad p\in I^k_j,$$
and for $k\leq j\leq 2^k-1$ we set
$$m_p:= 2^{\tau_k} m_{2^{2^k+j}}, \qquad p\in I^k_j, $$
where $\tau_k= (2^k-2k)/(2^k-k) $. For $k\in\N$, we observe that
$$m_{2^{2^k}}=2^{2^k}\qquad\text{and} \qquad  m_{2^{{2^k}+ k}}=2^{{2^k}+2k}.$$
The sequence $\M$ is clearly (lc), since $(m_p)_{p\in\N_0}$ is nondecreasing, and $\lim_{p\to\oo}m_p=\oo$. By definition, for any
$p\in I^k_j$ we have that $2p$ belongs to the adjacent interval $I^k_{j+1}$. We distinguish two cases:
\begin{enumerate}
 \item If $p\in I_j^k$ for $0\leq j\leq k-2$ or $j=2^k-1$, we have that $m_{2p}/m_p= 4$.
 \item If $p\in I_j^k$ for $k-1\leq j\leq 2^k-2$,  we have that
 $m_{2p}/m_p=2^{\tau_k}$.
\end{enumerate}
We observe that $\lim_{k\ri\oo} \tau_k=1$.
From both cases, we have that
\begin{equation}\label{equa.Schindl.Exam.liminf.limsup}
 1<2=\liminf_{p\ri\oo}\frac{m_{2p}}{m_p} \leq \limsup_{p\ri\oo}\frac{m_{2p}}{m_p}=4<\oo.
 \end{equation}
Using Proposition~\ref{propPropiedlcmg}, items (ii.2) and (iii), we see that $\M$ is (mg) and (snq). Next, we are
going to show that
$$\lim_{p\to\oo} \frac{\log(m_p)}{\log(p)}=\o(\M)=1.$$
If $p\in I^k_j$ for $0\le j\le k-1$  we get
$$\frac{(j+1)\log(4)+2^k\log(2)}{(2^k+j+1)\log(2)}\le\frac{\log(m_p)}{\log(p)}\le\frac{(j+1)\log(4)+2^k\log(2)}{(2^k+j)\log(2)}.$$
Since $0\le j\le k-1$ we have that
\begin{equation}\label{equa.Schindl.Exam.limlogmp.0.k}
\frac{2+2^k}{2^k+k}\le\frac{\log(m_p)}{\log(p)}\le\frac{2k +2^k }{2^k}.
\end{equation}
Now let $p\in I^k_{k+j}$ for $0\le j\le 2^k -k-1$, then
$$\frac{(j+1)\cdot\tau_k\cdot\log(2)+(2^k+2k)\log(2)}{(2^k+k+j+1)\log(2)}\le \frac{\log(m_p)}{\log(p)}\le \frac{(j+1)\cdot\tau_k\cdot\log(2)+(2^k+2k)\log(2)}{(2^k+k+j)\log(2)},$$
or equivalently,
$$1+\frac{k(2^k-k-j-1)}{(2^k+k+j+1) (2^k-k) }\le\frac{\log(m_p)}{\log(p)}\le1+\frac{k(2^k-k-j-2)+2^k}{(2^k+k+j) (2^k-k) }.$$
As $0\le j\le 2^k -k-1$, we see that
\begin{equation}\label{equa.Schindl.Exam.limlogmp.k.2^k}
1\le\frac{\log(m_p)}{\log(p)}\le1+\frac{k(2^k-k-2)+2^k}{2^{2k}-k^2 }.
\end{equation}
By~\eqref{equa.Schindl.Exam.limlogmp.0.k} and~\eqref{equa.Schindl.Exam.limlogmp.k.2^k} we see that
$\lim_{p\to\oo}\log(m_p)/\log(p)=1$. Using Remark~\ref{remaAlmostIncrease} one can show that $\gamma(\M)=1=\o(\M)$, then $\M$ satisfies (g).
As it happens in Example~\ref{example.dM.not.p.o},
by~\eqref{equa.Schindl.Exam.liminf.limsup}, the limit $\lim_{p\ri\oo}m_{2p}/m_p$
does not exist, then condition (d) in Theorem~\ref{theorem.charact.prox.order.nonzero} is violated.

Let us see that
condition (e) is also violated. On the contrary, suppose there exists
some other
(lc) sequence $\L$ with $\lim_{p\rightarrow\infty}\ell_p=+\infty$ such that $\M\approx\L$ and $d_{\L}$ is a nonzero proximate order.
By Proposition~\ref{propRelacionOrdenes}, as $\L$ is
(mg), we deduce that $\bm\simeq\bl$. Then there exists a constant $A\ge 1$ such that for all $q\in\N$, $q\ge 2$, and all
$p\in\N$ we get
$$\frac{1}{A^2}\cdot\frac{m_{qp}}{m_p}\le\frac{\ell_{qp}}{\ell_p},$$
and, by Theorem~\ref{theorem.charact.prox.order.nonzero}, the quotient on the right tends to $q^{\omega}$ for some $\omega=\omega(\L)>0$ as $p\rightarrow\infty$. In particular,
for every $i\in\N$ and $q=2^i$ we deduce that
\begin{equation}\label{equa.Schindl.Exam.limsup}
\frac{1}{A^2}\cdot\limsup_{p\rightarrow\infty}\frac{m_{2^ip}}{m_p}\le 2^{i\o}.
\end{equation}
Note that, since  $\bm\simeq\bl$, we have $\o=\o(\L)=\o(\M)=1$.
Moreover, for our sequence $\M$, given $i\in\N$ we have
for every $k\geq i$ that
$$\frac{m_{2^{{2^k}+ i}}}{m_{2^{2^k}}}=\frac{2^{2^k+2i}}{2^{2^k}}=4^i,$$
and this makes~\eqref{equa.Schindl.Exam.limsup} impossible.
\end{exam}

\begin{rema}
We summarize our previous examples. According to the previous remark, the sequences $\M_q$ and $\M_{0,\b}$ do not admit a nonzero proximate order, since they are not strongly regular. Regarding strongly regular sequences $\M$, for those appearing in applications $d_{\M}$ is a nonzero proximate order. This does not hold for the sequence in Example~\ref{example.dM.not.p.o}, which however does admit a nonzero proximate order. The previous example shows that there exist strongly  regular sequences satisfying (g) and not admitting a nonzero proximate order. Finally, the next example will provide a extremely pathological situation in which the limit in~(\ref{limit.logmp.over.logp.omega}) does not even exist.

In particular, strong regularity and~(\ref{limit.logmp.over.logp.omega}) are independent conditions, and both are necessary if we want $\M$ to admit a nonzero proximate order.

Also, note that for a strongly regular sequence not admitting a nonzero proximate order (like those in Examples~\ref{exampleGdoesnotimplyE} and~\ref{exampleNoAdmiteOrden}), the technique of construction of nontrivial flat functions in sectors of optimal opening $\pi\o(\M)$, developed in~\cite{SanzFlat}, is not available. It is worth mentioning that, although V.Thilliez~\cite{thilliez} is able to construct such flat functions for any strongly regular sequence, the sectors in which they are defined have their opening strictly smaller than $\pi\ga(\M)$. Since one always has $\ga(\M)\le \o(\M)$ (see~\cite[Prop.\ 3.7]{SanzFlat}), the opening is not optimal in his case.
\end{rema}

\begin{exam}\label{exampleNoAdmiteOrden}
It was an open question whether strong regularity was enough to have condition~(\ref{limit.logmp.over.logp.omega}). We answer this question in the negative.

We define $\M$ by the sequence of its quotients,
 $$m_0:=1, \qquad m_p:=e^{\de_p/p}m_{p-1}=\exp\left(\sum^p_{k=1}\frac{\de_k}{k} \right), \qquad p\in\N.$$
We consider the sequences
 $$k_{n}:=2^{3^n} <q_{n}:=k^2_n=2^{ 3^n 2}< k_{n+1}=2^{3^{n+1}}, \quad{n\in\N_0},$$
and we choose the sequence $(\de_k)^\oo_{k=1}$ as follows:
\begin{align*}
\de_1&=\de_2=2, \\
\de_k&=3,\quad \text{if} \quad k\in \{k_j+1, \dots, q_j\}, j\in\N_0,\\
\de_k&=2,\quad \text{if} \quad k\in \{q_j+1, \dots, k_{j+1}\}, j\in\N_0.
\end{align*}
From the very definition we deduce immediately that $m_{p+1}>m_p$ for $p\in\N_0$, then $\M$ is (lc).
We clearly have that
$$\exp\left(2 \sum^{2p}_{k=p+1}\frac{1}{k} \right)\leq\frac{m_{2p}}{m_p}=\exp\left( \sum^{2p}_{k=p+1}\frac{\de_k}{k} \right)
\leq \exp\left( 3\sum^{2p}_{k=p+1}\frac{1}{k} \right).$$
Using the asymptotic expression~\eqref{equaPartialSumsHarmonic} for the partial sums of the harmonic series,
we have that
$$\exp\left(2 \log(2)+2\ep_{2p}-2\ep_p\right)\leq\frac{m_{2p}}{m_p}
\leq \exp\left( 3 \log(2)+3\ep_{2p}-3\ep_p\right).$$
From these inequalities and using Proposition~\ref{propPropiedlcmg}, we deduce that $\M$ satisfies (mg) and (snq), therefore $\M$ is strongly regular. Observe that $\M$ verifies~(\ref{limit.logmp.over.logp.omega}) if, and only if, the sequence
$$t_p:=\frac{1}{\log(p)}\sum^p_{k=1}\frac{\de_k}{k},\qquad p\in\N,$$
is convergent (in other words, precisely when the sequence $(\de_k)^\oo_{k=1}$ is Riesz summable, see~\cite{Boos}).
We have the following relations:
\begin{align*}
t_{q_n}&=\frac{\log(k_{n})}{\log(q_n)}t_{k_n}+ 3 \frac{H_{q_n}-H_{k_n}}{\log(q_n)}=\frac{t_{k_n}}{2}+ \frac{3}{2}
+3\frac{\ep_{q_n}-\ep_{k_n}}{\log(q_n)},\\
t_{k_{n+1}}&=\frac{2t_{q_n}}{3}+ \frac{2}{3}+2\frac{\ep_{k_{n+1}}-\ep_{q_n}}{\log(k_{n+1})}.
\end{align*}
From these formulas, it is easy  to check that either $(t_{k_n})^\oo_{n=0}$
and $(t_{q_n})^\oo_{n=0}$ have different limits, or none of them converge.
In both situations, the sequence $(t_p)_{p\in\N}$ has no limit, and therefore $(\de_k)^\oo_{k=1}$ is not Riesz summable, which
leads to the conclusion.
\end{exam}

\begin{rema}
As soon as $\M$ admits a nonzero proximate order, the indices $\ga(\M)$ and $\o(\M)$ agree, so this happens for the sequences $\M_{\a,\b}$ (with $\a>0$) and for the one in Example~\ref{example.dM.not.p.o}. In turn, in the preceding example we have $\ga(\M)=2<\o(\M)=5/2$. The value of $\ga(\M)$ can be determined taking into account Remark~\ref{remaAlmostIncrease}, while the value of $\o(\M)$ requires a careful study of the Riesz means $(t_p)_{p\in\N}$. This is, to our knowledge, the first example of strongly regular sequence with different indices. In this situation, although non-quasianalyticity in $\widetilde{\mathcal{A}}_{\M}(G)$ is known to hold for every sectorial region $G$ with opening $\pi\gamma$ with $\gamma<\o(\M)$, no explicit construction of nontrivial flat functions in $\widetilde{\mathcal{A}}_{\M}(G)$ is currently available in case $\ga\in[\ga(\M),\o(\M))$.
\end{rema}

\noindent\textbf{Acknowledgements}: The first two authors are partially supported by the Spanish Ministry of Economy and Competitiveness under project MTM2012-31439. The first author is partially supported by the University of Valladolid through a Predoctoral Fellowship (2013 call) co-sponsored by the Banco de Santander. The third author is partially supported by the FWF-Project P 26735--N25 and by the FWF-Project J 3948--N35.

\vskip.5cm
\noindent Authors' Affiliation:\par\vskip.5cm
Javier Jim\'enez-Garrido and Javier Sanz\par
Departamento de \'Algebra, An\'alisis Matem\'atico, Geometr{\'\i}a y Topolog{\'\i}a\par
Instituto de Investigaci\'on en Matem\'aticas de la Universidad de Valladolid, IMUVA\par
Facultad de Ciencias\par
Universidad de Valladolid\par
47011 Valladolid, Spain\par
E-mail: jjjimenez@am.uva.es (Javier Jim\'enez-Garrido), jsanzg@am.uva.es (Javier Sanz).
\par\vskip.5cm

Gerhard Schindl\par
Fakult\"at f\"ur Mathematik\par
Universit\"at Wien\par
Oskar-Morgenstern Platz 1, A-1090 Wien, Austria\par
E-mail: gerhard.schindl@univie.ac.at.

\end{document}